\newcommand*\Cdot{\mathpalette\Cdot@{.5}}
\newcommand*\Cdot@[2]{\mathbin{\vcenter{\hbox{\scalebox{#2}{$\m@th#1\bullet$}}}}}
\DeclareMathAlphabet{\mathpzc}{OT1}{pzc}{m}{it}
\providecommand*{\twoheadrightarrowfill@}{%
  \arrowfill@\relbar\relbar\twoheadrightarrow
}
\providecommand*{\twoheadleftarrowfill@}{%
  \arrowfill@\twoheadleftarrow\relbar\relbar
}
\providecommand*{\xtwoheadrightarrow}[2][]{%
  \ext@arrow 0579\twoheadrightarrowfill@{#1}{#2}%
}
\providecommand*{\xtwoheadleftarrow}[2][]{%
  \ext@arrow 5097\twoheadleftarrowfill@{#1}{#2}%
}
\newcommand*{\relrelbarsep}{.386ex}
\newcommand*{\relrelbar}{%
  \mathrel{%
    \mathpalette\@relrelbar\relrelbarsep
  }%
}
\newcommand*{\@relrelbar}[2]{%
  \raise#2\hbox to 0pt{$\m@th#1\relbar$\hss}%
  \lower#2\hbox{$\m@th#1\relbar$}%
}
\providecommand*{\rightrightarrowsfill@}{%
  \arrowfill@\relrelbar\relrelbar\rightrightarrows
}
\providecommand*{\leftleftarrowsfill@}{%
  \arrowfill@\leftleftarrows\relrelbar\relrelbar
}
\providecommand*{\xrightrightarrows}[2][]{%
  \ext@arrow 0359\rightrightarrowsfill@{#1}{#2}%
}
\providecommand*{\xleftleftarrows}[2][]{%
  \ext@arrow 3095\leftleftarrowsfill@{#1}{#2}%
}
\newcommand{\colim@}[2]{%
  \vtop{\m@th\ialign{##\cr
    \hfil$#1\operator@font colim$\hfil\cr
    \noalign{\nointerlineskip\kern1.5\ex@}#2\cr
    \noalign{\nointerlineskip\kern-\ex@}\cr}}%
}
\newcommand{\colim}{%
  \mathop{\mathpalette\colim@{\rightarrowfill@\scriptscriptstyle}}\nmlimits@
}
\renewcommand{\varprojlim}{%
  \mathop{\mathpalette\varlim@{\leftarrowfill@\scriptscriptstyle}}\nmlimits@
}
\renewcommand{\varinjlim}{%
  \mathop{\mathpalette\varlim@{\rightarrowfill@\scriptscriptstyle}}\nmlimits@
}
\DeclareSymbolFont{cyrletters}{OT2}{wncyr}{m}{n}
\DeclareMathSymbol{\Sh}{\mathalpha}{cyrletters}{"58}
\newcommand*\bigcdot{\mathpalette\bigcdot@{.5}}
\newcommand*\bigcdot@[2]{\mathbin{\vcenter{\hbox{\scalebox{#2}{$\m@th#1\bullet$}}}}}
\tikzset{near start abs/.style={xshift=1cm}}
\DeclareSymbolFont{symbolsC}{U}{txsyc}{m}{n}
\DeclareMathSymbol{\Searrow}{\mathrel}{symbolsC}{117}
\DeclareSymbolFont{extraup}{U}{zavm}{m}{n}
\DeclareMathSymbol{\varheart}{\mathalpha}{extraup}{86}
\DeclareMathSymbol{\vardiamond}{\mathalpha}{extraup}{87}
\theoremstyle{definition}
\newtheorem{thm}{Theorem}[section]
\newtheorem{cor}{Corollary}[thm]
\newtheorem{lem}[thm]{Lemma}
\newtheorem{prop}[thm]{Proposition}
\newtheorem{conj}[thm]{Conjecture}
\theoremstyle{definition}
\newtheorem{ex}{Example}[section]
\newcommand{\gG}{\Gamma}
\newcommand{\gs}{\sigma}
\newcommand{\gd}{\delta}
\newcommand{\go}{\omega}
\newcommand{\bZ}{\mathbb{Z}}
\newcommand{\cC}{\CMcal{C}}
\newcommand{\cD}{\CMcal{D}}
\newcommand{\cP}{\CMcal{P}}
\newcommand{\cM}{\CMcal{M}}
\newcommand{\cF}{\CMcal{F}}
\newcommand{\cR}{\CMcal{R}}
\newcommand{\cW}{\CMcal{W}}
\newcommand{\cI}{\CMcal{I}}
\newcommand{\cB}{\CMcal{B}}
\newcommand{\cS}{\CMcal{S}}
\newcommand{\cT}{\CMcal{T}}
\newcommand{\CAT}{\operatorname{CAT}}
\newcommand{\FG}{\operatorname{FG}}
\newcommand{\Aut}{\operatorname{Aut}}
\newcommand{\PG}{\operatorname{PG}}
\newcommand{\Cir}{\operatorname{Cyc}}
\newcommand{\la}{\langle}
\newcommand{\ra}{\rangle}
\newcommand{\wt}{\widetilde}
\definecolor{Red}{rgb}{0.8,0,0.2}
\newcommand{\GG}[1]{}
\def\@footnotecolor{red}
\def\@footnotemark{%
    \leavevmode
    \ifhmode\edef\@x@sf{\the\spacefactor}\nobreak\fi
    \stepcounter{Hfootnote}%
    \global\let\Hy@saved@currentHref\@currentHref
    \hyper@makecurrent{Hfootnote}%
    \global\let\Hy@footnote@currentHref\@currentHref
    \global\let\@currentHref\Hy@saved@currentHref
    \hyper@linkstart{footnote}{\Hy@footnote@currentHref}%
    \@makefnmark
    \hyper@linkend
    \ifhmode\spacefactor\@x@sf\fi
    \relax
  }%
\title{Singer Cyclic Lattices of Type M}
\author{William Norledge}
\address[William Norledge]{Pennsylvania State University}
\email{wxn39@psu.edu}
\begin{document}

\renewcommand{\chapterautorefname}{Chapter}
\renewcommand{\sectionautorefname}{Section}
\renewcommand{\subsectionautorefname}{Section}

\begin{abstract}
We construct the type-preserving panel-regular lattices in buildings of type M, for M with entries two, three or infinity, which have cyclic stabilizers of spherical 2-residues. We obtain these lattices as fundamental groups of their associated quotient buildings, which are constructed by amalgamating quotients of generalized digons and triangles by actions of Singer cyclic groups. Each quotient construction has an associated gluing matrix which encodes how the quotient generalized polygons are amalgamated. We show that these gluing matrices also encode presentations of the panel-regular lattices.  
\end{abstract}

\maketitle

\setcounter{tocdepth}{1} 
\hypertarget{foo}{ }
\tableofcontents

\section*{Introduction} 

A Singer lattice is a discrete subgroup of the automorphism group of a locally finite building whose associated quotient is finite and which acts regularly on panels. See \cite{anneprobsonpolycomp} for details on lattices in buildings. A Singer cyclic lattice is a Singer lattice which has cyclic stabilizers of spherical $2$-residues. In this paper, we construct all the Singer cyclic lattices of type $M$ in the case where the off-diagonal entries of $M$ are in $\{ 2,3,\infty\}$, and the defining graph of $M$ is connected. Thus, via the Davis realization of a building \cite{davis94buildingscat0}, we construct examples of lattices in CAT(0) polyhedral complexes with links complete bipartite graphs and (incidence graphs of) finite projective planes. We first describe the $2$-residues which can appear in the quotient building of a Singer cyclic lattice of type $M$, and then determine all the ways in which these $2$-residues can be amalgamated to form a quotient. Our lattices generalize those in \cite{essert2013geometric}, where the Singer lattices of type $\wt{A}_2$ were constructed using complexes of groups. These lattices were further studied in \cite{witzel2016panel}. 

We obtain presentations of our lattices which are free products of a collection of Singer cycles quotiented by a set of relations which are read off the (decorated) defining graph of $M$ by going around cycles (\autoref{mainmain}). To construct quotients, we make use of the theory of Weyl graphs developed in \cite{nor2}. Weyl graphs are certain directed multigraphs with extra structure that can be used to model quotients of buildings by type-preserving chamber-free group actions. Unlike complexes of groups, Weyl graphs make direct use of the combinatorial $W$-metric structure enjoyed by buildings, and are in the spirit of \cite{tits81local}, \cite[Chapter 4]{ronanlectures}.

\subsection*{Structure} 

We begin in \autoref{sec:Preliminaries} by giving some basic definitions, and recalling aspects of the theory of Weyl graphs from \cite{nor2}. In \autoref{sec:Singer Cyclic Polygons} we obtain realizations of those Weyl graphs of type $A_2$ (triangles) and $A_1\times A_1$ (digons) which can exist as $2$-residues in the quotient of a Singer cyclic lattice. In \autoref{sec:Singer Cyclic Lattices of Type $M$} we construct all the Singer cyclic lattices of type $M$, for $M$ with values in $\{ 2,3,\infty\}$. We obtain group presentations of the Singer cyclic lattices. We finish in \autoref{sec:Examples of Singer Cyclic Lattices} by giving some explicit examples. 

\subsection*{Acknowledgments.} 

This research was partly supported by a grant from Templeton Religion Trust as part of the mathematical picture language project at Harvard University. We also thank Newcastle University for their support. 

\section{Preliminaries}  \label{sec:Preliminaries}

We recall some preliminary definitions and structures.

\subsection{Weyl graphs}\label{sec:Weyl graphs}

We recall Weyl graphs from \cite{nor2}. Let 
\[
M:S\times S\to \bZ_{\geq 1}\cup \{\infty\}
,\qquad
(s,t)\mapsto m_{st}
\]
be a Coxeter matrix,\footnote{ symmetric, and $m_{st}$=1 if and only if $s=t$} and let 
\[
W:=\big \la S\ |\ (st)^{m_{st}}=1 : s,t\in S  \big  \ra 
\] 
be the Coxeter group of $M$. See \cite[Section 2]{ab08} for details on Coxeter groups. A \emph{simplicial} graph $L=(V(L),E(L))$ is an undirected graph without loops or multiple edges in which the \emph{edges} $E(L)$ are modeled as $2$-element subsets of the \emph{vertices} $V(L)$. The \emph{defining graph} $L=L(M)$ of $M$ is the edge labeled simplicial graph with
\[V(L):=S\qquad \text{and} \qquad E(L):=\big\{\{s,t\}: s,t\in S,\  s\neq t,\ m_{st}< \infty \big\} \]
where the edge $\{s,t\}\in E(L)$ is labeled by $m_{st}$. 

Let a \emph{graph} $\cW$ of type $M$ consist of a set of \emph{chambers} $\cW_0$ and a set of \emph{edges} $\cW_1$ which are equipped with functions
\[     
\cW_1\to \cW_0\times \cW_0
,\qquad  
i\mapsto (\iota(i),\tau(i)) 
\qquad \text{and} \qquad  
\cW_1\to S
,\qquad   
i\mapsto \upsilon(i) 
.\]
Then $\iota(i)$ and $\tau(i)$ are called the \emph{initial} and \emph{terminal} chambers of $i$ respectively, and $\upsilon(i)\in S$ is called the \emph{type} of $i$. A \emph{morphism} $\go$ of graphs of type $M$ consists of a function of the chambers $\go_0$ and a function of the edges $\go_1$ which preserves initial/terminal chambers and types. A \emph{gallery} is a morphism $\beta:\lfloor a,b \rfloor \to \cW$ where $\lfloor a,b \rfloor$ has underlying graph
\[
\lfloor a,b \rfloor \ = \ a \xrightarrow{i_{a+1}}  a+1 \xrightarrow{i_{a+2}} \quad  \cdots \quad   \xrightarrow{i_{b-1}} b-1  \xrightarrow{i_{b}} b  , \qquad a,b \in \bZ,\quad  a\leq b
.\]
The \emph{type} $\beta_S$ of a gallery $\beta$ is the word over $S$ given by 
\[
\beta_S:= \upsilon(i_{a+1})\dots  \upsilon(i_{b})
.\]
The \emph{$W$-length} $\beta_W$ of a gallery $\beta$ is the element of $W$ obtained by treating its type $\beta_S$ as a product of generators of $W$. A \emph{geodesic} $\gamma:\lfloor a,b \rfloor \to \cW$ is a gallery whose type has minimal length amongst all the expressions of its $W$-length as a product of generators. For $s,t\in S$ with $s\neq t$ and $m_{st}< \infty$, an \emph{$(s,t)$-geodesic} $\rho_m(s,t)$, or \emph{alternating geodesic}, is a geodesic with type
\[ 
p_m(s,t):= \underbrace{stst\dots}_{\text{$m$ letters}} \qquad \qquad  \text{(it follows that $m\leq m_{st}$)}
.\] 
A \emph{maximal $(s,t)$-geodesic} $\rho(s,t)$, or \emph{maximal alternating geodesic}, is a gallery with type 
\[
p(s,t):=\underbrace{stst\dots}_{\text{$m_{st}$ letters}}
.\] 
An \emph{$(s,t)$-cycle} $\theta(s,t)$ is gallery with $\beta(a)=\beta(b)$ and type $p_{2m_{st}}(s,t)$. Let a \emph{Weyl graph} of type $M$, also denoted $\cW$, consist of a graph of type $M$ equipped with the following extra data,
\begin{enumerate}
\item 
for each $s\in S$, we have a (small) groupoid $\cW_s$ with objects the chambers of $\cW$, and whose non-identity morphisms are those edges of $\cW$ with type $s$, called the \emph{panel groupoid} of type $s$
\item
for each choice of $s,t\in S$ with $s\neq t$ and $m_{st}< \infty$, we have a set $\cW(s,t)$ of $(s,t)$-cycles in $\cW$ called \emph{defining $(s,t)$-suites}, or \emph{defining suites}. 
\end{enumerate}
The \emph{fundamental groupoid} $\overline{\cW}$ of $\cW$ is the freest groupoid generated by the $\cW_s$, $s\in S$, such that the composition of edges of any defining suite is an identity \cite[Section 2.6]{nor2}. For the data $\cW$ to be a Weyl graph, we require
\begin{enumerate} [itemindent=0cm, leftmargin=1.9cm]
\item[\textbf{(PW0)}] the $\cW_s$, $s\in S$, are non-trivial\footnote{ the trivial groupoid is the unique groupoid with one object and one morphism} 
\item[\textbf{(PW1)}] a morphism in $\overline{\cW}$ which is the composition of edges of a maximal $(s,t)$-geodesic is also equal to the composition of edges of a maximal $(t,s)$-geodesic
\item[\textbf{(W)}] any two geodesics whose composition of edges are equal in $\overline{\cW}$ have the same \hbox{$W$-length}. 
\end{enumerate}
See \cite[Section 3.1]{nor2}. In particular, property (W) means that $W$-length factors through to give a well-defined \emph{metrization} of the fundamental groupoid,
\[
\delta:\overline{\cW}\to W.\] 
See \cite[Section 5.3]{nor2}. Buildings $\Delta$ are recovered as those Weyl graphs $\cW$ such that $\overline{\cW}$ is connected and simply connected. For galleries $\beta$ and $\beta'$ in $\cW$, if their composition of edges coincide in $\overline{\cW}$ then we say that $\beta$ and $\beta'$ are \emph{homotopic}, and write $\beta\sim \beta'$.  

To every type-preserving chamber-free action of a group $G$ on a building $\Delta$ is associated the quotient Weyl graph $G\backslash \Delta$,
\[
\Delta \to G\backslash \Delta.
\] 
Conversely, to every Weyl graph $\cW$ is associated the universal covering building 
\[
\wt{\cW}\to \cW
\] 
equipped with an action of the fundamental group of $\cW$ (which, if we use basepoints, is just a local group of $\overline{\cW}$). See \cite[Section 4.3]{nor2}. These constructions are mutually inverse. Thus, there is essentially a 1-1 correspondence between type-preserving chamber-free actions and Weyl graphs. 

\subsection{Singer Lattices and Singer Graphs}

We call an action of a group on a building \emph{panel-regular} if the induced action on the set of panels of each type is regular, i.e. free and transitive. 
Let us define a \emph{Singer graph} to be a Weyl graph which is the quotient of a panel-regular action. Equivalently, a Singer graph is a Weyl graph whose panel groupoids are all isomorphic to a fixed connected setoid. The cardinal $q$ such that a Singer graph $\cW$ has $k=q+1$-many chambers is called the \emph{order} of $\cW$. We define a \emph{Singer building} of order $q$ to be a building which is the universal cover of a Singer graph of order $q$. A Singer building is locally finite if and only if it has finite order since each of its panels is $k$-many equivalent chambers. Therefore we may define a \emph{Singer lattice} 
\[
\gG< \Aut(\Delta)
\] 
of order $q$ to be a subgroup of the automorphism group of a building $\Delta$ such that $\gG\backslash \Delta$ is a Singer graph of order $q$, where $q$ is finite. We define a \emph{Singer cyclic lattice} to be a Singer lattice which has cyclic stabilizers of spherical $2$-residues. Our definitions of Singer lattices and Singer cyclic lattices generalize those in \cite{essert2013geometric}, \cite{witzel2016panel} to all types of building. 

A \emph{Weyl $m$-gon} is a connected Weyl graph of type $I_2(m)$, a \emph{generalized $m$-gon} is a building of type $I_2(m)$, a \emph{Singer $m$-gon} is a Singer graph of type $I_2(m)$,\footnote{ this is more restrictive than the usual definition of a Singer polygon, many authors define a Singer polygon to be a generalized polygon equipped with an action of a group which is point-regular, e.g. \cite{de2009singer}; our definition of a Singer polygon is equivalent to a generalized polygon equipped with an action of a group which is point-regular and line-regular} and a \emph{Singer cyclic $m$-gon} is a Singer $m$-gon whose fundamental group is cyclic. For $m=2,3$ we say \emph{digon} and \emph{triangle} respectively, and for arbitrary $m$ we say \emph{polygon}. 

A Weyl graph is a Singer graph if and only if its $2$-residues are Singer polygons, and a Singer cyclic lattice is equivalently a Singer lattice whose quotient's $2$-residues are Singer cyclic polygons. These statements follow from covering theory developed in \cite[Section 4]{nor2}. 

\section{Singer Cyclic Polygons} \label{sec:Singer Cyclic Polygons}

In this section we obtain realizations of those Weyl digons and triangles which can exist as $2$-residues in the quotient of a Singer cyclic lattice. For digons the construction is straightforward. For triangles we follow \cite{essert2013geometric}, and use the method of difference sets from finite geometry. We will see that for $q\geq 2$, there exists a unique Singer cyclic digon of order $q$, and for $q\geq 2$ a prime power, there exists a Singer cyclic triangle of order $q$. The uniqueness of this Singer triangle is (equivalent to) a long standing conjecture. 

\subsection{Singer Cyclic Digons}

The isomorphism classes of finite generalized digons are in bijection with pairs $(q_1,q_2)$, for $q_1,q_2\in \bZ_{\geq 1}$. The simplicial building of the digon corresponding to $(q_1,q_2)$ is the complete bipartite graph on $q_1+1$ white vertices and $q_2+1$ black vertices. 

The Weyl graph of the generalized digon corresponding to $(q_1,q_2)$, denoted $\mathbf{D}(q_1,q_2)$, may be constructed as follows. Let
\[
k_1:=q_1+1\qquad  \text{and} \qquad k_2:=q_2+1
.\] 
Let the set of chambers of $\mathbf{D}(q_1,q_2)$ be the set
\[
\mathbf{D}(q_1,q_2)_0:=\bZ/k_1\bZ\times \bZ/k_2\bZ
.\] 
Let $S=\{s,t\}$ be the set of labels of $\mathbf{D}(q_1,q_2)$. The panel groupoid of type $s$ of $\mathbf{D}(q_1,q_2)$ is the setoid corresponding to the equivalence relation
\[ 
(x,y)\sim_s (x',y')\qquad  \text{if}\quad  x=x'
.\]
The panel groupoid of type $t$ of $\mathbf{D}(q_1,q_2)$ is the setoid corresponding to the equivalence relation
\[ 
(x,y)\sim_t (x',y')\qquad \text{if}\quad y=y'
.\]
Thus, the generalized digon $\mathbf{D}(q_1,q_2)$ is a $k_1\times k_2$ grid of chambers, with chambers in the same column being $s$-equivalent, and chambers in the same row being $t$-equivalent. We abbreviate $\mathbf{D}(q):=\mathbf{D}(q,q)$. See the left part of \autoref{fig:3times3}, which shows $\mathbf{D}(2)$.

We now construct the Singer cyclic digons. If a group $G$ acts panel-regularly on $\mathbf{D}(q_1,q_2)$, then $|G|=q_1+1=q_2+1$. Put $q:=q_1=q_2$ and $k:=q+1$. 

\begin{prop} \label{prop:uniquedigon}
Let $G$ be the cyclic group of order $k$. Then $G$ acts panel-regularly on $\mathbf{D}(q)$, and this action is unique up to equivariant automorphism.
\end{prop}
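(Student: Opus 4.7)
The plan is to prove existence by writing down an explicit diagonal action, and to prove uniqueness by showing that every panel-regular action of the cyclic group on $\bD(q)$ decomposes coordinate-wise and can then be conjugated into this normal form by an automorphism of $\bD(q)$. I work throughout with the realization above in which the chambers are $\bZ/k\bZ \times \bZ/k\bZ$, the $s$-panels are the columns, and the $t$-panels are the rows.

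For existence I take the diagonal action $n \cdot (x,y) := (x+n,\, y+n)$ of $G = \bZ/k\bZ$. This preserves columns and rows setwise, so it is type-preserving; on the set of $s$-panels, identified with $\bZ/k\bZ$ by first coordinate, it acts as translation by $n$, hence regularly, and likewise on $t$-panels. Note that panel-regularity automatically implies chamber-freeness, since any element fixing a chamber fixes the two panels through it and hence is trivial by freeness on panels; the action is therefore admissible in the Weyl graph framework of \cite{nor2}.

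For uniqueness, suppose $G$ acts on $\bD(q)$ by type-preserving automorphisms with the induced action on each panel class regular. Type-preservation forces the chamber action to respect the column and row partitions, so it has the separated form
\[ n \cdot (x,y) \;=\; \bigl(F_n(x),\; H_n(y)\bigr), \]
where $F, H \colon G \to \Sym(\bZ/k\bZ)$ are homomorphisms whose images are regular subgroups of order $k$, i.e.\ cyclic groups generated by a single $k$-cycle $F_1$ (resp.\ $H_1$). Since any $k$-cycle in $\Sym(\bZ/k\bZ)$ is conjugate to the standard cycle $x \mapsto x+1$, there exist bijections $\sigma, \tau \colon \bZ/k\bZ \to \bZ/k\bZ$ with $F_n = \sigma T_n \sigma^{-1}$ and $H_n = \tau T_n \tau^{-1}$ for all $n$, where $T_n$ denotes translation by $n$. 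The map $\phi(x,y) := (\sigma^{-1}(x),\, \tau^{-1}(y))$ is then a type-preserving automorphism of $\bD(q)$, being a product of a column-relabeling and a row-relabeling, and a direct check shows it intertwines the given action with the standard diagonal action, giving the required equivariant automorphism.

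The one subtlety is verifying cleanly that the chamber action really does split coordinate-wise; once that is in hand, the argument is just the elementary uniqueness of regular cyclic actions of order $k$ on a $k$-set, applied independently to the two panel classes. I do not foresee any genuine obstacle --- the content of the proposition is essentially this uniqueness, transported simultaneously across both panel types via an automorphism of $\bD(q)$.
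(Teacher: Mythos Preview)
Your proof is correct and follows essentially the same approach as the paper: both give the diagonal action for existence, and for uniqueness both build an equivariant automorphism out of the regular actions on the two panel classes. The paper writes the intertwiner directly as $(x,y)\mapsto (x_s,y_t)$ using the $\ast$-orbit of the chamber $(0,0)$, which, once one notes (as you do) that any type-preserving automorphism of $\bD(q)$ splits as $(\sigma,\tau)$, is exactly your conjugating pair obtained from the orbit maps of $F$ and $H$.
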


\begin{proof}
The group $G$ acts panel-regularly on $\mathbf{D}(q)$ as follows; pick a generator $g\in G$ and for $(x,y)\in \bZ/k\bZ\times \bZ/k\bZ$ a chamber of $\mathbf{D}(q)$, put
\[g \cdot(x,y)=(x+1,y+1).\] 
This is an automorphism of $\mathbf{D}(q)$ since it is a permutation of the chambers which preserves $\sim_s$ and $\sim_t$. Suppose that $G$ acts in a second way on $\mathbf{D}(q)$, which we denote by `$\ast$'. For $x\in \bZ/k\bZ$, let $x_s$ be the $s$-panel which contains $g^x\ast (0,0)$, and for $y\in \bZ/k\bZ$, let $y_t$ be the $t$-panel which contains $g^y\ast (0,0)$. Let $(x_s,y_t)$ denote the unique chamber which is contained in both $x_s$ and $x_t$. We claim that $(x,y)\mapsto (x_s,y_t)$ is a permutation of the chambers of $\mathbf{D}(q)$. To see this, suppose that
\[(x,y)\mapsto (x_s,y_t),\ \ \ \ (x',y')\mapsto (x'_s,y'_t),   \ \ \ \ \text{and} \ \ \ \   (x_s,y_t)=(x'_s,y'_t).       \]
Then $x_s=x'_s$ and $y_t=y'_t$, and so $x=x'$ and $y=y'$ since the action `$\ast$' is free on panels. This permutation is an automorphism since if $(x,y)\sim_s (x',y')$, then $x=x'$, and so $x_s=x'_s$ (likewise for $t$). To see that this automorphism is equivariant, let $h\cdot x$ and $h\cdot y$ be the integers such that $(h\cdot x,h\cdot y)=h\cdot(x,y)$, and let $h\ast x_s$ and $h\ast y_t$ be the integers such that $(h\ast x_s,h\ast y_t)=h\ast(x_s,y_t)$. For $h=g^n$, we have 
\[(h\cdot x)_s=(x+n)_s=  g^{x+n}\ast (0,0) =g^n  \ast   g^x\ast (0,0) =g^n \ast x_s=h\ast x_s  \]
\[(h\cdot y)_t=(y+n)_s=  g^{y+n}\ast (0,0) =g^n  \ast   g^y\ast (0,0) =g^n  \ast y_t=h\ast y_s. \qedhere  \] 
\end{proof}

\begin{cor}  \label{corcorcor}
For each $q\in \bZ_{\geq 1}$, there is a unique Singer cyclic digon of order $q$ (up to isomorphism). 
\end{cor}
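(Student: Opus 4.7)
The plan is to deduce the corollary from Proposition \ref{prop:uniquedigon} by unpacking the definition of a Singer cyclic digon and invoking the 1-1 correspondence (from \cite{nor2}) between type-preserving chamber-free actions on buildings and Weyl graphs.

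For existence, given $q\in \bZ_{\geq 1}$, the explicit panel-regular action of the cyclic group $\gG$ of order $k=q+1$ on $\bD(q)$ constructed in the proof of Proposition \ref{prop:uniquedigon} is type-preserving and chamber-free (the latter because the action is free on each panel and every chamber lies in a panel). Under the correspondence from \cite{nor2}, it produces a quotient Weyl graph $\gG\backslash \bD(q)$ which is by construction a Singer graph of type $I_2(2)$ with $k$ chambers per panel, and whose fundamental group is $\gG$, hence cyclic. This is a Singer cyclic digon of order $q$.

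For uniqueness, let $\cW$ be any Singer cyclic digon of order $q$. Unfolding definitions, $\cW = \pi_1(\cW)\backslash \wt{\cW}$ where $\wt{\cW}$ is a generalized digon and the induced action of $\pi_1(\cW)$ is panel-regular; moreover $\pi_1(\cW)$ is cyclic. Panel-regularity forces $|\pi_1(\cW)|=k$, and it also forces $\wt{\cW}=\bD(q)$, since a panel-regular action on $\bD(q_1,q_2)$ requires $q_1+1=|\pi_1(\cW)|=q_2+1$. Hence $\cW$ arises from a panel-regular action of the cyclic group of order $k$ on $\bD(q)$. By Proposition \ref{prop:uniquedigon}, such an action is unique up to equivariant automorphism, and the correspondence of \cite{nor2} carries equivariantly isomorphic actions to isomorphic quotient Weyl graphs. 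Therefore $\cW$ is isomorphic to the Singer cyclic digon constructed above.

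The entire argument is routine once the dictionary between panel-regular actions and Singer graphs is in place; the real content lies in Proposition \ref{prop:uniquedigon}, and I anticipate no obstacle beyond verifying the translation step.
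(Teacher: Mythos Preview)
Your proposal is correct and follows the same approach as the paper: deduce the corollary from Proposition~\ref{prop:uniquedigon} by noting that equivariantly isomorphic actions yield isomorphic quotient Weyl graphs. The paper's proof is a one-line invocation of this principle, whereas you have spelled out the details (existence, why the universal cover must be $\bD(q)$, and the translation via \cite{nor2}); but the underlying argument is identical.
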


\begin{proof}
Equivariant actions will produce isomorphic quotients. Therefore the result follows from \autoref{prop:uniquedigon}.
\end{proof}

We denote by $k\backslash \mathbf{D}(q)$ the unique Singer cyclic digon of order $q$. We wish to obtain a canonical realization of $k\backslash \mathbf{D}(q)$. Let $G$ be the cyclic group of order $k$, and let $g\in G$ be a generator. We represent $k\backslash \mathbf{D}(q)$ as the quotient of $\mathbf{D}(q)$ by the action of $G$ given by 
\[
g\cdot (x,y)=(x+1,y+1)\] 
for $(x,y)$ a chamber of $\mathbf{D}(q)$. Recall that we have an associated covering 
\[
\pi:\mathbf{D}(q)\to G \backslash \mathbf{D}(q)
.\] 
Let the set of chambers of $k\backslash \mathbf{D}(q)$ be $\cC:=\bZ /k\bZ$, and identify $k\backslash \mathbf{D}(q)$ with $G \backslash \mathbf{D}(q)$ by letting a chamber $x\in \cC$ be the $\pi$-image of $(0,x)\in \mathbf{D}(q)$. To complete our realization of $k\backslash \mathbf{D}(q)$, we need to specify a set of defining suites. 

Given a chamber $x\in \cC$ of $k\backslash \mathbf{D}(q)$, the \emph{flower} $\cF_J(x)$ of $k\backslash \mathbf{D}(q)$ based at $x$ is the set of maximal $(s,t)$-geodesics and maximal $(t,s)$-geodesics which issue from $x$. The suites formed from these geodesics are a set of defining suites for $k\backslash \mathbf{D}(q)$ \cite[Theorem 6.2]{nor3}. A \emph{petal} is a $2$-element subset of $\cF_J(x)$ consisting of a maximal $(s,t)$-geodesic $\rho(s,t)$, together with the unique maximal $(t,s)$-geodesic $\rho(t,s)$ such that $\rho(s,t)\sim \rho(t,s)$ (recall this notation from \autoref{sec:Weyl graphs}). In the following, for chambers $x,y,z\in \cC$, we denote by
\[   
[x \xrightarrow{s} y\xrightarrow{t} z]            
\]
the maximal $(s,t)$-geodesic whose first edge is from $x$ to $y$, and whose second edge is from $y$ to $z$. 


\begin{prop} \label{flowers}
Given a chamber $x\in \cC$, the petals of the flower of $k\backslash \mathbf{D}(q)$ based at $x$ are
\[ 
[x \xrightarrow{s} y\xrightarrow{t} z] \sim   [  x\xrightarrow{t} y'\xrightarrow{s} z  ]   
\] 
 where $y,z,y'\in\cC$, $x\neq y\neq z$, and $y'=x-y+z$. 
\end{prop}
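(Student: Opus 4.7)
The plan is to work with the covering $\pi\colon \bD(q)\to k\backslash\bD(q)$ provided by \autoref{prop:uniquedigon}. Since the suites of the quotient are exactly the $\pi$-images of the suites of $\bD(q)$, every petal of the flower at $x\in\cC$ lifts, up to the $G$-action, to a petal of $\bD(q)$ based at the canonical representative $(0,x)\in\bD(q)_0$, and conversely every such lifted petal projects to a petal at $x$. The proof therefore reduces to enumerating the petals of $\bD(q)$ based at $(0,x)$ and computing their $\pi$-images.

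Since $\bD(q)$ is a generalized digon, its petals based at $(0,x)$ are precisely the pairs of length-two galleries of types $st$ and $ts$ running from $(0,x)$ to a chamber opposite to it. Concretely, for each $(a,b)\in\bD(q)_0$ with $a\neq 0$ and $b\neq x$, the two galleries
\[(0,x)\xrightarrow{s}(0,b)\xrightarrow{t}(a,b)\qquad\text{and}\qquad(0,x)\xrightarrow{t}(a,x)\xrightarrow{s}(a,b)\]
form one such petal, and every petal at $(0,x)$ is uniquely of this form. To push this down to $\cC$ I would use that applying $g^{-a'}$ normalises the first coordinate of any chamber $(a',b')$, so under the identification of $\cC$ with $G\backslash\bD(q)_0$ one has $\pi(a',b')=b'-a'$. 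Applied to the four chambers in the petal above, this yields the quotient galleries $[x\xrightarrow{s} b\xrightarrow{t} b-a]$ and $[x\xrightarrow{t} x-a\xrightarrow{s} b-a]$.

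Setting $y=b$, $z=b-a$, and $y'=x-a$, the identity $y'=x-a=x-(y-z)=x-y+z$ drops out immediately, and the opposition conditions $a\neq 0$ and $b\neq x$ translate into exactly $y\neq z$ and $x\neq y$, matching the inequalities in the statement. Reversing the substitution (given $(x,y,z)$ with $x\neq y\neq z$, take $a=y-z$ and $b=y$) shows the parametrization is a bijection onto the petals at $(0,x)$, completing the proof. I expect the only real difficulty to be clerical: carefully tracking the normalisation $g^{-a'}(a',b')=(0,b'-a')$ so that each lifted chamber is identified with the correct element of $\cC$, and confirming that the stated inequalities are precisely the non-degeneracy conditions ensuring the lift is an honest building petal rather than a pair of shorter or non-opposite galleries.
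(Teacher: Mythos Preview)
Your proof is correct and follows essentially the same approach as the paper: both arguments exploit the explicit covering $\pi\colon \bD(q)\to k\backslash\bD(q)$ and the coordinate description $\pi(a',b')=b'-a'$ to match up the two half-suites. The only cosmetic difference is direction---the paper lifts a quotient half-suite to a general chamber $(c,d)$ and equates the endpoints of the two lifts, whereas you enumerate petals at $(0,x)$ in $\bD(q)$ and project down---but the resulting computation and the identity $y'=x-y+z$ fall out identically.
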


\begin{proof}
Let,
\[
\rho(s,t)=[x\xrightarrow{s} y \xrightarrow{t} z]
\] 
be an maximal $(s,t)$-geodesic of $k\backslash \mathbf{D}(q)$ which issues from $x$. Lift $\rho(s,t)$ with respect to $\pi$ to a gallery $\tilde{\rho}(s,t)$ of $\mathbf{D}(q)$ (lifts of galleries are discussed in \cite[Section 4.1]{nor2}). Let $(c,d)\in \mathbf{D}(q)$ be the initial chamber of $\tilde{\rho}(s,t)$. It follows from the construction of $\mathbf{D}(q)$ that
\[
\tilde{\rho}(s,t)=\big[(c,d)\xrightarrow{s} (c,d+y-x) \xrightarrow{t} (c+y-z,d+y-x) \big]
.\]
Let
\[
\rho(t,s)=[x\xrightarrow{t} y' \xrightarrow{s} z]
\] 
be the unique maximal $(t,s)$-geodesic of $k\backslash \mathbf{D}(q)$ with $\rho(s,t)\sim\rho(t,s)$. Let $\tilde{\rho}(t,s)$ be the unique lifting of $\rho(t,s)$ to a gallery which issues from $(c,d)$. Then
\[
\tilde{\rho}(t,s)=\big[(c,d)\xrightarrow{t} (c+x-y',d) \xrightarrow{s} (c+x-y',d+z-y')\big]
.\]
 We have $\tilde{\rho}(s,t)\sim \tilde{\rho}(t,s)$ since $\pi$ is a covering, and so
\[     
(c+y-z,d+y-x)  = (c+x-y',d+z-y')    
\] 
 which occurs if and only if
\[  
y'= x-y+z
.\qedhere \]
\end{proof}

We let the defining suites of $k \backslash \mathbf{D}(q)$ be those which are induced by the flower based at $0$. Thus, by \autoref{flowers}, the defining suites of $k \backslash \mathbf{D}(q)$ are the cycles
\begin{equation} \label{eq:2}      \tag{$\vardiamond$}
[0\xrightarrow{s} y\xrightarrow{t} z    \xrightarrow{s} (z-y)  \xrightarrow{t} 0           ]  
\end{equation}
where $y,z\in \cC$ and $0\neq y\neq z$. Notice that there are $q^2$ many defining suites. These defining suites are sufficient to define $k\backslash \mathbf{D}(q)$ by \cite[Theorem 6.2]{nor3}.

\begin{ex}
Let $q=2$. The $2^2=4$ defining suites of $3\backslash \mathbf{D}(2)$ are
\[    [0\xrightarrow{s} 1\xrightarrow{t} 0  \xrightarrow{s} 2\xrightarrow{t} 0  ]  \]
\[    [0\xrightarrow{s} 1\xrightarrow{t} 2  \xrightarrow{s} 1\xrightarrow{t} 0  ]  \]
\[    [0\xrightarrow{s} 2\xrightarrow{t} 0\xrightarrow{s} 1\xrightarrow{t} 0  ]  \]
\[    [0\xrightarrow{s} 2\xrightarrow{t} 1\xrightarrow{s} 2\xrightarrow{t} 0  ] .\]
One can check in \autoref{fig:3times3} that the lifting of these galleries to galleries which issue from either $(0,0)$, $(1,1)$, or $(2,2)$ are cycles in $\mathbf{D}(2)$. 
\end{ex}

\begin{lem} \label{lem:rotatingdigon}
Let $q\in \bZ_{\geq 1}$, $k=q+1$, and let $\cC=\bZ /k\bZ$. Let $r\in \bZ /k\bZ$. Then the map 
\[
\cC\to \cC, \qquad x\mapsto x+r
\] 
is an automorphism of $k \backslash \mathbf{D}(q)$. 
\end{lem}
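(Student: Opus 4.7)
The cleanest approach is to exhibit the translation as descending from an automorphism of $\bD(q)$ that commutes with the Singer cyclic action, using the realization of $k\backslash\bD(q)$ as $G\backslash\bD(q)$ given just before \autoref{flowers}.

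Concretely, I would define $\psi\colon\bD(q)\map \bD(q)$ by $(x,y)\mapsto(x,y+r)$. This is plainly a bijection of chambers, and it preserves both $\sim_s$ (fixed first coordinate) and $\sim_t$ (fixed second coordinate), so it is an automorphism of the Weyl graph $\bD(q)$. A direct calculation shows $\psi$ commutes with the $G$-action $g\cdot(x,y)=(x+1,y+1)$, since the two translations act independently on the two coordinates:
\[
\psi(g\cdot(x,y))=\psi(x+1,y+1)=(x+1,y+r+1)=g\cdot(x,y+r)=g\cdot\psi(x,y).
\]
By general properties of the covering $\pi\colon\bD(q)\map G\backslash\bD(q)$ established in \cite{nor2}, an automorphism of a Weyl graph that commutes with a deck-transformation group descends to an automorphism of the quotient. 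Under the chamber identification $\cC\ni x\leftrightarrow\pi(0,x)$ fixed in the realization of $k\backslash\bD(q)$, the descended map sends $x$ to $\pi(\psi(0,x))=\pi(0,x+r)=x+r$, which is precisely the claimed map.

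As a sanity check, I would also note that a direct verification is available and equally short. Because $G$ acts transitively on each panel of $\bD(q)$, every two chambers of $k\backslash\bD(q)$ are both $s$- and $t$-equivalent, so any bijection $\cC\map\cC$ preserves $\sim_s$ and $\sim_t$ automatically. For the defining suites, by \autoref{flowers} the petal pairing at $x$ is $[x \xrightarrow{s} y\xrightarrow{t} z]\sim[x \xrightarrow{t} y'\xrightarrow{s} z]$ with $y'=x-y+z$, and this relation is manifestly invariant under adding the constant $r$ to each of $x,y,z,y'$. Hence translation by $r$ carries the flower at $x$ to the flower at $x+r$ and so preserves the collection of flowers (and therefore the defining suites in \eqref{eq:2} up to the usual inducement).

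No step presents a substantive obstacle. The only mild subtlety is picking the correct lift $\psi$: the alternative lift $(x,y)\mapsto(x+r,y+r)$ is the Singer action itself and hence descends to the identity on $\cC$, so one must shift only one coordinate to obtain a nontrivial descended translation. Tracking the identification of chambers through $\pi$ is then the last bookkeeping step.
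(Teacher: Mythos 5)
Your proposal is correct. Your primary argument — lifting the translation to $\psi\colon(x,y)\mapsto(x,y+r)$ on $\bD(q)$, verifying that $\psi$ commutes with the Singer action $g\cdot(x,y)=(x+1,y+1)$, and descending along $\pi$ to the quotient — is a genuinely different and more conceptual route than the one the paper takes. The paper proves the lemma entirely inside the quotient: since every panel groupoid of $k\backslash\bD(q)$ is all of $\cC$, any bijection is automatically a chamber-system automorphism, and then it checks flower preservation via \autoref{flowers}, noting that the relation $y'=x-y+z$ is invariant under adding $r$ to each term, i.e.\ $y'+r=(x+r)-(y+r)+(z+r)$. That is precisely your ``sanity check'' paragraph. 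The descent approach buys conceptual clarity — it exhibits the translation as the shadow of an equivariant automorphism of the universal cover, at the cost of invoking the general fact from the covering framework that deck-commuting automorphisms descend — whereas the paper's direct verification is shorter and uses nothing beyond the already-computed flower description. Your observation that the alternative lift $(x,y)\mapsto(x+r,y+r)$ is the Singer action itself, and so descends to the identity, is a useful clarification of why one must shift a single coordinate to get a nontrivial translation downstairs.
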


\begin{proof}
This map is clearly a chamber system automorphism. To check that it is also an automorphism of Weyl data, by the characterization of isomorphisms and the properties of flowers, we just have to check the preservation of the flowers of $k \backslash \mathbf{D}(q)$. This follows from \autoref{flowers}, and the fact that
\[y'+r=(x+r)-(y+r)+(z+r). \qedhere  \]
\end{proof}

\begin{cor} \label{cor:rotatingdigon}
Let $\cP$ be a Singer cyclic digon of order $q$, and let $C\in \cP$ be any chamber. Then there exists an isomorphism $\go:\cP\rightarrow k \backslash \mathbf{D}(q)$ such that $\go(C)=0$.
\end{cor}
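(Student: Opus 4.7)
The plan is to combine the uniqueness result of \autoref{corcorcor} with the rotation automorphisms provided by \autoref{lem:rotatingdigon}. By \autoref{corcorcor}, the Singer cyclic digon of order $q$ is unique up to isomorphism, so there exists at least one isomorphism $\psi:\cP\rightarrow k\backslash \bD(q)$. Set $r=\psi(C)\in \cC=\bZ/k\bZ$.

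Next, \autoref{lem:rotatingdigon} tells us that translation by $-r$, i.e. the map $\tau_{-r}:\cC\rightarrow \cC$ with $x\mapsto x-r$, is an automorphism of $k\backslash \bD(q)$. Define
\[\go = \tau_{-r}\circ \psi:\cP\rightarrow k\backslash \bD(q).\]
This is a composition of an isomorphism with an automorphism, hence itself an isomorphism, and it satisfies $\go(C)=\tau_{-r}(\psi(C))=\tau_{-r}(r)=0$, as required.

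There is no real obstacle here; the argument is a transitivity-of-automorphisms observation, amounting to the fact that the group of automorphisms of $k\backslash \bD(q)$ supplied by \autoref{lem:rotatingdigon} already acts transitively on chambers (indeed, simply transitively, since $|\cC|=k$ matches the number of translations $\tau_r$). The only thing to double-check is that $\tau_{-r}$ really is defined on the correct target, but since both $\psi(C)$ and the ambient chamber set live in $\bZ/k\bZ$, the composition makes sense directly.
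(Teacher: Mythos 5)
Your proof is correct and matches the paper's argument essentially verbatim: the paper also takes an arbitrary isomorphism $\go'$ from \autoref{corcorcor}, composes with the translation $x\mapsto x-\go'(C)$ (which is an automorphism by \autoref{lem:rotatingdigon}), and observes the composite sends $C$ to $0$.
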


\begin{proof}
By \autoref{corcorcor} there exists an isomorphism $\go':\cP\rightarrow k \backslash \mathbf{D}(q)$. Let $\go''$ be the isomorphism
\[\go'': k \backslash \mathbf{D}(q)  \to k \backslash \mathbf{D}(q),   \qquad  x\mapsto x-\go'(C).\] 
Then we can take $\go=\go''  \circ \go'$. 
\end{proof}

\begin{figure}[t]
\centering
\includegraphics[scale=1]{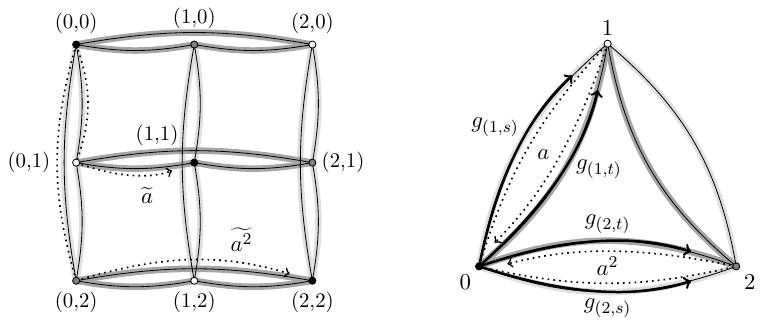}
\caption{The generalized digon $\mathbf{D}(2)$ (left) and the Singer cyclic digon $3\backslash \mathbf{D}(2)$ (right)}
\label{fig:3times3}
\end{figure}  

See \cite[Section 6]{nor2} for details on calculating universal and fundamental groups of Weyl graphs. To calculate the universal group of $k \backslash \mathbf{D}(q)$ we need to equip $k \backslash \mathbf{D}(q)$ with a generating set $\cS=\cB\sqcup \cI$ \cite[Section 6.1]{nor2}. Since each panel groupoid of $k \backslash \mathbf{D}(q)$ is a setoid, we have $\cB=\emptyset$. Let 
\[\cC^{\ast}:=\{1,\dots, q\}\subset \cC.\] 
For $n\in \cC^{\ast}$ and $\sigma\in \{s,t\}$, let $g_{(n,\sigma)}$ be the edge $0\xrightarrow{\sigma}n$ of $k \backslash \mathbf{D}(q)$. Then put
\[\cS=\cI:=\big \{    g_{(n,s)} , g_{(n,t)},g_{(n,s)}^{-1} , g_{(n,t)}^{-1}  :  n\in \cC^{\ast}    \big     \}.\] 
For notational convenience, let $g_{(0,s)}$, $g_{(0,t)}$, $g_{(0,s)}^{-1}$, and $g_{(0,t)}^{-1}$ denote the empty word. Then it follows from (\ref{eq:2}) that the set of $\cS$-suites of $k \backslash \mathbf{D}(q)$ is
\[\cR= \big\{    g_{(y,s)}  g_{(y,t)}^{-1} g_{(z,t)} g_{ (z, s) }^{-1} g_{ (z-y,s)}   g_{(z-y,t)}^{-1}   :\ y,z\in \cC;\ 0\neq y \neq z     \big \}. \]
 We now substitute $a_{(n)}=g_{(n,s)} g_{(n,t)}^{-1}$, for $n\in \cC^{\ast}$. Thus, the new generating set is
\[\cS':= \big \{    g_{(n,s)} , g_{(n,t)},g_{(n,s)}^{-1} , g_{(n,t)}^{-1}, a_{(n)} :  n\in \cC^{\ast}     \big    \}\] 
and, letting $a_{(0)}$ and $a_{(0)}^{-1}$ denote the empty word, a new set of equivalent relations is
\begin{align*}
	\cR':= \big\{\       a_{(y)} a_{(z)}^{-1} a_{(z-y)}=1   &:\ y,z\in \cC;\ 0\neq y \neq z \ \\
	a_{(n)}=g_{(n,s)} g_{(n,t)}^{-1} &:\  n\in \cC^{\ast} \ \big \}. 
\end{align*}
 By putting $z=0$, we see that $a_{(y)} a_{(-y)}=1$ for $y\in \cC^{\ast}$. By putting $z=1$, we see that $a_{(y)}a_{(1)}^{-1} a_{(1-y)}   = 1$, or equivalently $a_{(y)}=a_{(y-1)}a_{(1)}$, for $y\in \cC^{\ast}$, $y\neq 1$.  Thus, by induction, we have $a_{(y)}=a_{(1)}^y$ for $y\in \cC^{\ast}$. In particular, we have $a_{(1)}^{k}=1$. If we put $a=a_{(1)}$ and include the consequence $a^{k}=1$, then the relations $a_{(y)} a_{(z)}^{-1} a_{(z-y)}=1$ are redundant. Therefore a new generating set is
\[\cS'':=\big \{    g_{(n,s)} , g_{(n,t)},g_{(n,s)}^{-1} , g_{(n,t)}^{-1}, a :  n\in \cC^{\ast}    \big     \}\] 
and a new set of equivalent relations is
\[                   \cR'':= \big\{ a^{k}=1,  \     a^n=g_{(n,s)} g_{(n,t)}^{-1} :   n\in \cC^{\ast}    \big \} .  \]
Thus, we obtain the following.

\begin{lem} \label{universalgruop2}
	Let $k \backslash \mathbf{D}(q)$ be the unique Singer cyclic digon of order $q$. Then the universal group of $k \backslash \mathbf{D}(q)$ is
	\[\FG(k \backslash \mathbf{D}(q))=   \big \la      g_{(n,\sigma)} ,g_{(n,\sigma)}^{-1}, a     \ |  \  
	a^{k}=1,  \     a^n=g_{(n,s)} g_{(n,t)}^{-1},\ g_{(n,\sigma)}g_{(n,\sigma)}^{-1}=1        \big \ra  \]
	for $\sigma\in \{s,t\}$ and $n\in \cC^{\ast}$.  
\end{lem}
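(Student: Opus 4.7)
The plan is to start from the general presentation of the universal group of a Weyl graph given in \cite{nor2}: for a chosen generating set $\cS = \cB \sqcup \cI$, the group $\FG(\cW)$ is presented by $\cS$ modulo the inverse relations, the group relations coming from $\cB$, and the relations read off the defining suites of $\cW$. I would instantiate this with the generating set $\cS = \cI$ fixed in the preceding paragraph, noting that no ball generators are needed because the panel groupoids of $k \backslash \bD(q)$ are setoids, so $\cB = \emptyset$. The defining-suite relations are then exactly those read off $(\vardiamond)$, namely $g_{(y,s)} g_{(y,t)}^{-1} g_{(z,t)} g_{(z,s)}^{-1} g_{(z-y,s)} g_{(z-y,t)}^{-1} = 1$ for $y, z \in \cC$ with $0 \neq y \neq z$. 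From here the proof becomes a sequence of Tietze transformations which I would organize into three steps.

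First, I would substitute $a_{(n)} := g_{(n,s)} g_{(n,t)}^{-1}$ for $n \in \cC^*$ (extending by $a_{(0)} := 1$ to match the empty-word convention), which turns each defining-suite relation into the compact form $a_{(y)} a_{(z)}^{-1} a_{(z-y)} = 1$. Second, specializing to $z = 1$ gives $a_{(y)} = a_{(y-1)} a_{(1)}$, so by induction on $y \in \cC$ starting from $a_{(0)} = 1$ I would conclude $a_{(y)} = a^y$ for $a := a_{(1)}$. Wrapping around the cyclic group $\cC = \bZ/k\bZ$ at $y = k$ then forces the single relation $a^k = 1$.

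Third, once $a_{(y)} = a^y$ and $a^k = 1$ are imposed, every remaining defining-suite relation collapses to the tautology $a^y a^{-z} a^{z-y} = 1$ and may be deleted, while the auxiliary generators $a_{(n)}$ for $n \neq 1$ may be eliminated as they are simply abbreviations for $a^n$. What remains is the presentation in the statement: generators $g_{(n,\sigma)}, g_{(n,\sigma)}^{-1}, a$ subject to $a^k = 1$, the defining equations $a^n = g_{(n,s)} g_{(n,t)}^{-1}$, and the inverse relations $g_{(n,\sigma)} g_{(n,\sigma)}^{-1} = 1$.

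I expect the main obstacle to be purely bookkeeping around the excluded indices and empty-word conventions: I need to verify that the induction passes cleanly through $y = 0$ and $y = 1$, that the forbidden case $y = z$ in $(\vardiamond)$ does not cost me any relation needed to derive $a^k = 1$ (in particular that the $z = 1$ specialization still gives enough instances), and that the conventions $g_{(0,\sigma)} = 1$ and $a_{(0)} = 1$ remain compatible with the substitutions at every stage of the Tietze rewriting.
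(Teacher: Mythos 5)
Your proposal is correct and follows the paper's own derivation essentially step for step: same generating set, same passage from $(\vardiamond)$ to the relations $a_{(y)}a_{(z)}^{-1}a_{(z-y)}=1$, same substitution $a_{(n)}=g_{(n,s)}g_{(n,t)}^{-1}$, same induction to $a_{(y)}=a^y$ and $a^k=1$, and same elimination of redundant relations. The one small step you gloss over is that $z=1$ alone gives $a_{(y)}=a_{(1-y)}^{-1}a_{(1)}$, and you also need the $z=0$ specialization $a_{(y)}a_{(-y)}=1$ to rewrite this as $a_{(y)}=a_{(y-1)}a_{(1)}$ (and to close the cycle at $y=k$); the paper records this explicitly, and it falls squarely inside the bookkeeping you already flag as needing verification.
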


To calculate the fundamental group of $k \backslash \mathbf{D}(q)$ we need to quotient out a spanning tree \cite[Theorem 6.1]{nor2}. Of course, we know the fundamental group should be cyclic of order $k$. Let $T$ be the spanning tree 
\[
T:=\big \{g_{(n,t)},g_{(n,t)}^{-1}: n\in \cC^{\ast}\big  \}
.\] 
Then we recover the fundamental group of $k\backslash \mathbf{D}(q)$,
\[  
\pi_1(k\backslash \mathbf{D}(q),T)=\big \la  g_{(n,s)}  ,g_{(n,s)}^{-1} , a\ |\  a^{k}=1,  \     a^n=g_{(n,s)}, \  g_{(n,s)}g_{(n,s)}^{-1}=1    \big  \ra\sim  \big \la a| a^k=1    \big  \ra            
\] 
for $n\in \cC^{\ast}$. Notice that the image of the gallery $[ 0\xrightarrow{s} n \xrightarrow{t} 0   ]$ in the fundamental group at $T$ is $a^n$, see \autoref{fig:3times3}. One obtains an action of $\pi_1(k\backslash \mathbf{D}(q),T)$ on $\mathbf{D}(q)$ after picking a chamber in $\mathbf{D}(q)$. In fact, different choices of chamber will result in the same action since $\pi_1(k\backslash \mathbf{D}(q))$ is abelian.

\subsection{Singer Cyclic Triangles}

We now obtain realizations of the Singer cyclic triangles using the method of difference sets. We denote by $\cT(q)$ the generalized triangle whose associated simplicial building is the incidence graph of $\PG(2,q)$.

In the language of Weyl graphs, the method of difference sets $\cD$ in a group $G$ provides a way of constructing a generalized triangle $\cT(\cD)$ and a universal cover
\[
\pi:\cT(\cD)\rightarrow G \backslash \cT(\cD)
\] 
where $\pi$ is the quotient map associated to a panel-regular action of $G$ on $\cT(\cD)$. See \cite{dembowski2012finite} for details on difference sets. We focus on the case where $G$ is cyclic, or equivalently, where $G \backslash \cT(\cD)$ is a Singer cyclic triangle. Such difference sets are usually called cyclic difference sets, and are studied in \cite{berman53finite}. We define a \emph{difference set} $\cD$ of order $q$ to be a subset $\cD\subset \bZ/\gd\bZ$ such that the map
\[
\cD \times \cD \to \bZ/\gd\bZ,\qquad  (x,y)\mapsto x-y
\]
when restricted to the off-diagonal elements is a bijection into $\big\{1,\dots,\gd-1\big\}$. Thus, for all non-zero $n\in \bZ/\gd\bZ$, there exists a unique pair $d,d'\in \cD$ such that $d-d'=n$. Notice that $|\cD| = q + 1=k$. A difference set $\cD$ is called \emph{based} if $0\in  \cD$, in which case we let $\cD^{\ast}:=\cD\setminus \{0\}$. 

Let $\cD$ be a difference set of order $q$, and let $x\in \bZ/\gd \bZ$ and $r \in \Aut(\bZ/\gd \bZ)=\bZ/\gd \bZ^{\ast}$. Then 
\[
r\cD+x=\{rd+x:d\in \cD\}
\] 
is also a difference sets of order $q$. Two difference sets $\cD$ and $\cD'$ are called \emph{equivalent} if there exists $x\in \bZ/\gd \bZ$ and $r\in \bZ/\gd \bZ^{\ast}$ such that $\cD'=r\cD+x$. Our notion of equivalence is different to that of \cite{berman53finite}.

Let $\cD$ be a difference set of order $q$. By results of Singer \cite{singer1938theorem}, we obtain a generalized triangle $\cT(\cD)$ of order $q$ from $\cD$ as follows. Let the chambers of $\cT(\cD)$ be the set
\[
\cT(\cD)_0:=\big \{   (x,x+d) :  x \in \bZ/\gd\bZ, \   d \in \cD  \big    \}          
.\] 
Let $S=\{s,t \}$ be the set of labels of $\cT(\cD)$. The panel groupoid of type $s$ of $\cT(\cD)$ is the setoid corresponding to the equivalence relation
\[
(x,y)\sim_s (x',y')\qquad   \text{if}\qquad   x=x'
.\]
 The panel groupoid of type $t$ of $\cT(\cD)$ is the setoid corresponding to the equivalence relation
\[  
(x,y)\sim_t (x',y')\qquad   \text{if}\qquad  y=y'
.\]
\autoref{fig:fano013} shows $\cT(\cD)$ for $\cD=\{0,1,3\}$, in which case $\cT(\cD)\cong \cT(2)$. Let $G$ be the cyclic group of order $\gd$, and let $g\in G$ be a generator. Then $G$ acts panel-regularly on $\cT(\cD)$ via $g\cdot (x,y)=(x+1,y+1)$. Hence, we obtain a universal cover 
\[\pi:\cT(\cD)\to  G \backslash \cT(\cD)\] 
where $G \backslash \cT(\cD)$ is a Singer cyclic triangle of order $q$. Let us denote by $\gd \backslash \cT(\cD)$ a Weyl graph which is isomorphic to $G \backslash \cT(\cD)$.

A difference set $\cD$ is called \emph{Desarguesian} if $\cT(\cD)\cong \cT(q)$. We have the following well-known open conjecture.

\begin{conj} \label{conj}
	All difference sets are Desarguesian.
\end{conj}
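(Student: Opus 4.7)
The plan is to reduce the conjecture to a structural result about the cyclic Singer group $G\cong \bZ/\gd\bZ$ acting panel-regularly on $\cT(\cD)$, and then to force $q$ to be a prime power, at which point classical results of Singer apply. Since $|\cD|=q+1$ and the off-diagonal difference map is a bijection into $\bZ/\gd\bZ\setminus\{0\}$, counting gives $\gd=q^2+q+1$, so the ambient group is determined by $q$. The goal would be to show that $\cD$ is equivalent --- in the sense defined above, i.e.\ up to multiplication by a unit of $\bZ/\gd\bZ$ and translation --- to a Singer difference set $\cD_0$ arising from $\PG(2,q)$.

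The standard approach proceeds in two stages. First, one applies Hall's multiplier theorem: for every prime divisor $p$ of $q$, the integer $p$ is automatically coprime to $\gd=q^2+q+1$ (since $q\equiv 0\pmod p$ implies $\gd\equiv 1\pmod p$), and is therefore a \emph{multiplier} of $\cD$, meaning $p\cD=\cD+x$ for some $x\in \bZ/\gd\bZ$. After translating $\cD$ so that it is fixed setwise by the multiplier group $\langle p\rangle\subset (\bZ/\gd\bZ)^{\ast}$, the orbit sizes of this action on $\cD$ must divide the multiplicative order of $p$ modulo $\gd$, producing strong Diophantine constraints relating $q$ and $\gd$. Combining these constraints with character-sum estimates on $\bZ/\gd\bZ$ is the principal tool that has been brought to bear on the problem.

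The second stage --- and the genuine obstacle --- is to conclude from these constraints that $q$ must be a prime power; this is essentially the \emph{Prime Power Conjecture} for cyclic projective planes, and it is where every known approach has stalled. Results of Bruck, Evans--Mann, Ostrom, and Baumert--Gordon rule out non-prime-power orders up to very large numerical bounds via intricate number-theoretic analysis of the resulting Diophantine systems, but no general argument is known. Once one has $q=p^e$, the final step is to invoke Singer's original construction \cite{singer1938theorem}, together with a uniqueness statement for cyclic planar difference sets of prime power order (itself open in full generality), to identify $\cD$ with $\cD_0$ up to equivalence and thereby establish $\cT(\cD)\cong\cT(q)$. The difficulty is therefore not technical but foundational: the passage from local multiplier data on primes dividing $q$ to a global contradiction ruling out composite non-prime-power $q$ has resisted proof for more than eighty years, which is precisely why \autoref{conj} is stated here as a conjecture rather than proved.
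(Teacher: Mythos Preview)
The paper does not attempt to prove this statement: it is explicitly recorded as a ``well-known open conjecture'' and left without proof. Your proposal correctly recognizes this and, rather than claiming a proof, sketches the standard line of attack --- Hall's multiplier theorem, translation to a fixed difference set under the multiplier group, reduction to the Prime Power Conjecture --- and then identifies exactly where every known argument stalls. That is an accurate and informative summary of the state of the problem, and it goes well beyond what the paper itself says (the paper merely names the conjecture and moves on). There is no discrepancy to flag: neither you nor the paper proves \autoref{conj}, for the good reason that it remains open.
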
  

Let $\cD$ and $\cD'$ be equivalent difference sets of order $q$. Let $z\in \bZ/\gd \bZ$ and $r\in \bZ/\gd \bZ^{\ast}$ such that $\cD'=r\cD+z$. Then the bijection of chambers
\[\go_0:\cT(\cD)_0\to \cT(\cD')_0,\qquad  (x,y)\mapsto (rx+z,ry+z)\] 
clearly preserves $\sim_s$ and $\sim_t$. Thus, $\go_0$ determines an isomorphism $\go:\cT(\cD)\to \cT(\cD')$. Let $G$ and $G'$ be the cyclic groups of order $\gd$ which act on $\cT(\cD)$ and $\cT(\cD')$ respectively, with $g\in G$ and $g'\in G'$ being the chosen generators. Let $\psi:G\to G'$ be the isomorphism such that $g\mapsto (g')^r$. Then $\go$ is $\psi$-equivariant. Thus, equivalent difference sets essentially construct the same universal cover. 

Let $G$ be a cyclic group of order $\gd$ which acts panel-regularly on a generalized triangle $\cT$ of order $q$. Pick a generator $g\in G$, an $s$-panel $P$ of $\cT$, and a $t$-panel $L$ of $\cT$. Then 
\[ \cD=\{  d\in \bZ/\gd\bZ : P \cap g^d\cdot  L\neq \emptyset \} \] 
is a difference set of order $q$, called a difference \emph{obtained} from the action of $G$. Different choices of $g$, $P$ and $L$ produce equivalent difference sets. 

Let $G$ also act on $\cT(\cD)$ by $g\cdot(x,y)=(x+1,y+1)$. For $x\in \bZ/\gd\bZ$ and $d\in \cD$, let $(P_{x},L_{x+d})$ denote the unique chamber of $\cT$ which is contained in $g^{x}\cdot P$ and $g^{x+d}\cdot L$. Then
\[\cT(\cD)\to \cT,\qquad  (x,x+d)\mapsto  (P_{x},L_{x+d})    \]
is an equivariant isomorphism. Conversely, the difference sets obtained from the action of $G$ on $\cT(\cD)$ will be equivalent to $\cD$. Thus, there's essentially a 1-1 correspondence between difference sets up to equivalence and panel-regular actions of cyclic groups on generalized triangles.         

The following is a classical result of Singer, combined with the uniqueness result of Berman.

\begin{thm}[Singer-Berman]          \label{prop:singconjec}
	For all $q$ a prime power, there exists a Desarguesian difference set of order $q$, and this difference set is unique up to equivalence. 
\end{thm}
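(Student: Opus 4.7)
The plan is to reduce the theorem to a conjugacy statement in $\Aut(\cT(q))$ and then apply two classical results. By the correspondence developed in the previous paragraphs, a Desarguesian difference set of order $q$ modulo the paper's equivalence is the same data as a panel-regular cyclic action of order $\gd := q^2+q+1$ on $\cT(q)$ modulo equivariant isomorphism, i.e.\ modulo conjugacy of the underlying cyclic subgroup of $\Aut(\cT(q))$. Hence it suffices to prove (i) such a subgroup exists, and (ii) any two such subgroups are conjugate in $\Aut(\cT(q))$.

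For (i) I would use Singer's classical construction. Realize $\bF_{q^3}$ as a three-dimensional $\bF_q$-vector space, so that the points and lines of $\PG(2,q)$ are its $1$- and $2$-dimensional $\bF_q$-subspaces. Multiplication by $\bF_{q^3}^*$ acts freely and transitively on $\bF_{q^3} \setminus \{0\}$, and its subgroup $\bF_q^*$ fixes every subspace through the origin, so the quotient $G := \bF_{q^3}^*/\bF_q^*$ is cyclic of order $\gd$ and acts faithfully on $\PG(2,q)$. Orbit-stabilizer then forces $G$ to be regular on points, and projective duality yields regularity on lines, giving the desired panel-regular action on $\cT(q)$; the recipe of the preceding paragraph extracts from this a Desarguesian difference set.

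For (ii), the heart of the matter is the classical conjugacy result itself: any two cyclic subgroups of $\Aut(\cT(q))$ of order $\gd$ are conjugate in the full collineation group. This is the nontrivial projective-geometric content originally due to Singer, with the uniqueness clinched by Berman, and rests on the fact that any such element is semisimple with minimal polynomial an irreducible cubic over $\bF_q$, forcing a single conjugacy class of companion matrices. Once this is granted, conjugation by some $\phi \in \Aut(\cT(q))$ sends a chosen generator, $s$-panel, and $t$-panel for one subgroup to analogous data for the other; by the earlier discussion of difference sets obtained from actions, combined with the effects on the extracted difference set of changing the generator (a twist by $r \in (\bZ/\gd\bZ)^*$) and translating the panels (a shift by $x \in \bZ/\gd\bZ$), the resulting two difference sets are equivalent in the sense of this section. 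The main obstacle is thus the cited projective-geometric conjugacy statement; everything beyond it is bookkeeping with the correspondence built up in this subsection.
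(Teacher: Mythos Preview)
Your proposal is correct in substance, but note that the paper's own ``proof'' consists entirely of two citations: existence is deferred to Singer's original paper and uniqueness to Berman's. There is no argument given in the paper beyond that.

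What you have written is therefore not a different route so much as an actual sketch of the content behind those citations. Your reduction to conjugacy of Singer cycles in $\Aut(\cT(q))$ is exactly the correspondence the paper has just set up in the paragraph ``Difference Sets Obtained from Actions,'' and your account of existence via $\bF_{q^3}^\ast/\bF_q^\ast$ is the standard Singer construction. For uniqueness, your framing via a single conjugacy class of companion matrices captures the linear-algebraic core, though Berman's original argument proceeds through multiplier theory rather than a direct conjugacy computation; either route yields the same conclusion. In short: the paper punts to the literature, and you have correctly unpacked what that literature contains.
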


\begin{proof}
	For existence see \cite{singer1938theorem}, for uniqueness see \cite{berman53finite}.
\end{proof}

As with the Singer cyclic digons, we wish to obtain a canonical realization of $\gd \backslash \cT(\cD)$. Let the set of chambers of $\gd \backslash \cT(\cD)$ be $\cD$, where $d\in \cD$ is the $\pi$-image of $(x ,x+d )\in \cT(\cD)$. To complete our realization of $\gd \backslash \cT(\cD)$, we need to specify a set of defining suites. Let us first determine what the flowers of $\gd \backslash \cT(\cD)$ are by inspecting the covering $\pi:\cT(\cD)\to \gd \backslash \cT(\cD)$.

\begin{prop} \label{prop:flowerfor3} 
	Let $\cD$ be a difference set of order $q$. For each chamber $C\in \cD$, the petals of the flower of $\gd \backslash \cT(\cD)$ based at $C$ are
	\[      [C\xrightarrow{s} x \xrightarrow{t} y\xrightarrow{s} z] \sim [C \xrightarrow{t}  x' \xrightarrow{s} y' \xrightarrow{t} z ]      \]
	 where $C, x, y, z,x',y'\in \cD$, $C\neq x\neq y\neq z$, and $x'-y' = C-x+y-z$.
\end{prop}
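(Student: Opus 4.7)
The plan is to follow exactly the template of \autoref{flowers}: lift both half-suites via the covering $\pi$ to galleries in $\cT(\cD)$ starting from a common chamber, then use the fact that $\pi$ is a covering to equate the endpoints of the lifts, and read off the condition on $x,y,z,x',y'$ from the coordinates. Concretely, I would fix a lift $(a,a+C)$ of the base chamber $C$ and compute the two liftings step by step using the description of the panel groupoids of $\cT(\cD)$: an $s$-edge preserves the first coordinate and a $t$-edge preserves the second coordinate, and each edge is labeled by the $\cD$-difference of its target chamber.

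First I would lift the $(s,t,s)$-half-suite $\rho(s,t,s)=[C\xrightarrow{s} x\xrightarrow{t} y\xrightarrow{s} z]$ starting at $(a,a+C)$. Reading the lift off one edge at a time, we arrive at
\[
\tilde\rho(s,t,s)=\big[(a,a+C)\xrightarrow{s}(a,a+x)\xrightarrow{t}(a+x-y,a+x)\xrightarrow{s}(a+x-y,a+x-y+z)\big].
\]
Then I would lift the unique equivalent $(t,s,t)$-half-suite $\rho(t,s,t)=[C\xrightarrow{t} x'\xrightarrow{s} y'\xrightarrow{t} z]$ starting at the same chamber, obtaining
\[
\tilde\rho(t,s,t)=\big[(a,a+C)\xrightarrow{t}(a+C-x',a+C)\xrightarrow{s}(a+C-x',a+C-x'+y')\xrightarrow{t}(a+C-x'+y'-z,a+C-x'+y')\big].
\]

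Since $\pi$ is a covering and $\tilde\rho(s,t,s)$ and $\tilde\rho(t,s,t)$ are lifts of equivalent galleries starting at the same chamber, they must be equivalent in $\cT(\cD)$; as $\cT(\cD)$ is a generalized triangle, the defining relation $sts=tst$ forces the two lifts to have the same terminal chamber. Equating the two endpoints gives
\[
(a+x-y,\,a+x-y+z)=(a+C-x'+y'-z,\,a+C-x'+y'),
\]
and either coordinate equation simplifies to $x'-y'=C-x+y-z$, which is exactly the stated relation.

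The routine parts are the coordinate computations of the two lifts; the only thing to be careful about is that at each step the $\pi$-image of the new chamber in $\cT(\cD)$ is indeed the prescribed element of $\cD$, which follows from our convention that $d\in\cD$ is the image of any chamber $(x,x+d)$. The constraints $C\neq x\neq y\neq z$ in the proposition just ensure the gallery is non-stuttering, and the analogous implicit constraint on $x',y'$ is forced by the equivalence together with non-stuttering; no real obstacle arises beyond bookkeeping, and the proof is a direct adaptation of \autoref{flowers}.
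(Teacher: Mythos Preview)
Your proof is correct and follows essentially the same approach as the paper: lift both half-suites to $\cT(\cD)$ from a common chamber, use that $\pi$ is a covering to force the lifted endpoints to agree, and read off the relation $x'-y'=C-x+y-z$ from the coordinates. The only cosmetic difference is that you write the base lift explicitly as $(a,a+C)$ whereas the paper uses a generic $(c,d)$ (which tacitly satisfies $d-c=C$); your version is arguably cleaner since the image of each intermediate lifted chamber is visibly the required element of $\cD$.
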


\begin{proof}
Let
\[\rho(s,t)=[C\xrightarrow{s} x \xrightarrow{t} y\xrightarrow{s} z]\]
be a maximal $(s,t)$-geodesic of $\gd \backslash \cT(\cD)$ which issues from $C$. Lift $\rho(s,t)$ with respect to $\pi$ to a gallery $\tilde{\rho}(s,t)$ of $\cT(\cD)$. Let $(c,d)\in \bZ/\gd\bZ\times \bZ/\gd\bZ $ be the initial chamber of $\tilde{\rho}(s,t)$. It follows from the construction of $\cT(\cD)$ that
\[\tilde{\rho}(s,t)=\big[(c,d)\xrightarrow{s} (c,d+x-C) \xrightarrow{t} (c+x-y,d+x-C)\xrightarrow{s} (c+x-y,d+x-C+z-y)\big].\]
Let
\[\rho(t,s)=[C\xrightarrow{t} x' \xrightarrow{s} y'\xrightarrow{t} z]\] 
be the unique maximal $(t,s)$-geodesic of $\gd \backslash \cT(\cD)$ with $\rho(s,t)\sim\rho(t,s)$. Let $\tilde{\rho}(t,s)$ be the lifting of $\rho(t,s)$ to a gallery which issues from $(c,d)$. Then 
\[
\tilde{\rho}(t,s)=\big[(c,d)\xrightarrow{t} (c+C-x',d) \xrightarrow{s} (c+C-x',d+y'-x')\xrightarrow{t} (c+C-x'+y'-z,d+y'-x')\big]
.\]
 We have $\tilde{\rho}(s,t)\sim \tilde{\rho}(t,s)$, and so
\[     
(c+x-y,d+x-C+z-y)  = (c+C-x'+y'-z,d+y'-x')    
\] 
 which occurs if and only if
\[  
x'-y' = C-x+y-z  
.\qedhere \]
\end{proof}

Let us assume that $\cD$ is based. Clearly, every difference set is equivalent to a based difference set, so this is no real restriction. We let the defining suites of $\gd \backslash \cT(\cD)$ be those which are induced by the flower based at $0\in \cD$. Thus, by \autoref{prop:flowerfor3}, the defining suites of $\gd \backslash \cT(\cD)$ are the cycles
\[[0\xrightarrow{s} x \xrightarrow{t} y\xrightarrow{s} z \xrightarrow{t}  y' \xrightarrow{s} x' \xrightarrow{t} 0] \]      
 where $x, y, z,x',y'\in \cD$, $0\neq x\neq y\neq z$, and
\begin{equation} \label{eq:3}      \tag{$\clubsuit$}
	y'-x' = x-y+z  .
\end{equation}
Notice that there will be $q^3$ many defining suites. 

\begin{ex}
Let $\cD=\{0,1,3\}$. Then $\cD$ is a difference set of order $2$. The $2^3=8$ defining suites of $7 \backslash \cT(\cD)$ are
\begin{align*}
		[0\xrightarrow{s} & 1\xrightarrow{t} 0\xrightarrow{s} 1   \xrightarrow{t}  3\xrightarrow{t}  1\xrightarrow{s} 0  ]   \\
		[0\xrightarrow{s} & 1\xrightarrow{t} 0\xrightarrow{s} 3\xrightarrow{s} 0   \xrightarrow{t} 3   \xrightarrow{t} 0  ]  \\
		[0\xrightarrow{s} & 1\xrightarrow{t} 3\xrightarrow{s} 0\xrightarrow{t}1\xrightarrow{s} 3\xrightarrow{t}0  ]          \\
		[0\xrightarrow{s} & 1\xrightarrow{t} 3\xrightarrow{s} 1 \xrightarrow{s} 0  \xrightarrow{t} 1  \xrightarrow{t} 0  ]  \\  
		[0\xrightarrow{s} & 3\xrightarrow{t} 0\xrightarrow{s} 1 \xrightarrow{s} 0  \xrightarrow{t} 3 \xrightarrow{t} 0  ]   \\
		[0\xrightarrow{s} & 3\xrightarrow{t} 0\xrightarrow{s} 3 \xrightarrow{s} 0\xrightarrow{t} 1\xrightarrow{t} 0  ]      \\
		[0\xrightarrow{s} & 3\xrightarrow{t} 1\xrightarrow{s} 0 \xrightarrow{s} 3 \xrightarrow{t} 1\xrightarrow{t} 0  ]    \\
		[0\xrightarrow{s} & 3\xrightarrow{t} 1\xrightarrow{s} 3  \xrightarrow{s} 1\xrightarrow{t} 3\xrightarrow{t} 0  ] . 
\end{align*}
\end{ex}

\begin{figure}[t]
	\centering
	\includegraphics[scale=1]{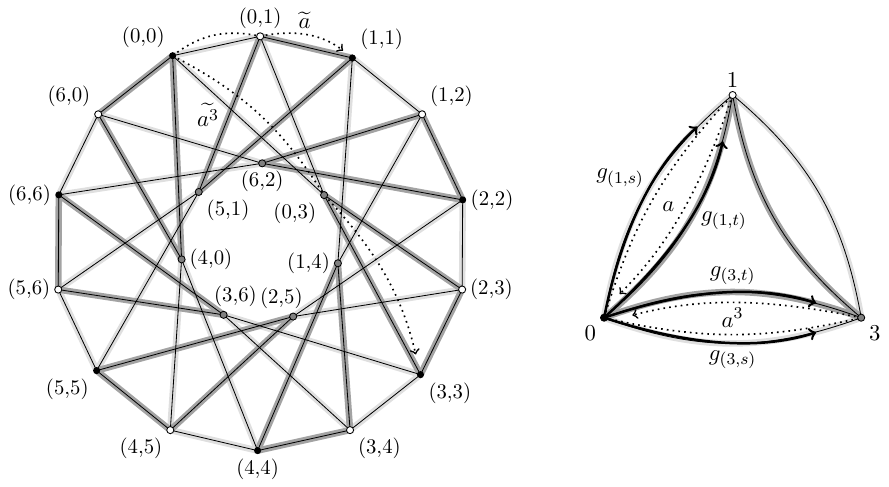}
	\caption{The generalized triangle $\cT(\cD)$ (left) and the Singer cyclic triangle $7 \backslash \cT(\cD)$ (right)}
	\label{fig:fano013}
\end{figure}

Let $\cD$ and $\cD'$ be equivalent difference sets of order $q$ with $\cD'=r\cD+x$. Then it follows from \autoref{prop:flowerfor3} that
\[
\go:\gd \backslash \cT(\cD)\to \gd \backslash \cT(\cD'),\qquad  d\mapsto rd+x
\] 
preserves flowers, and so is an isomorphism. By \autoref{prop:singconjec}, this shows that if \autoref{conj} holds, then for each $q$ a prime power, there is a unique Singer cyclic triangle of order $q$. We let $\gd \backslash \cT(q)$ denote the unique (up to isomorphism) Singer cyclic triangle of order $q$ whose universal cover is $\cT(q)$. Thus, if $\cD$ is Desarguesian, we have $\gd \backslash \cT(\cD)\cong \gd \backslash \cT(q)$. 

The following result shows that given any chamber $C$ in a Singer cyclic triangle $\cP$, we can represent $\cP$ as $ \gd \backslash \cT(\cD)$, for some based difference set $\cD$, such that $C$ is identified with $0\in \cD$.  

\begin{lem} \label{canbe0}
	Let $\cP$ be a Singer cyclic triangle of order $q$, and let $C\in \cP$ be any chamber. Then there exists a based difference set $\cD$ of order $q$ and an isomorphism $\go:\cP\rightarrow \gd \backslash\cT(\cD)$ such that $\go(C)=0$.  
\end{lem}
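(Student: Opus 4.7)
The plan is to mirror the strategy used in \autoref{cor:rotatingdigon}: first use the difference set machinery to realize $\cP$ as $\gd\backslash\mathcal{T}(\cD')$ for some based difference set $\cD'$, and then precompose with an automorphism of $\gd\backslash\mathcal{T}(\cD')$ that moves the designated chamber to $0$, exploiting the fact that equivalent difference sets yield isomorphic Singer cyclic triangles.

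First, since $\cP$ is a Singer cyclic triangle of order $q$, by definition it is the quotient of a generalized triangle $\cT$ by a panel-regular action of a cyclic group $G$ of order $\gd$. By the paragraph on \textbf{Difference Sets Obtained from Actions}, choosing a generator $g\in G$, an $s$-panel $P$, and a $t$-panel $L$ produces a difference set $\cD'$ of order $q$ together with an equivariant isomorphism $\mathcal{T}(\cD')\to \cT$. Passing to quotients gives an isomorphism $\go':\cP\to \gd\backslash\mathcal{T}(\cD')$. A priori $\cD'$ need not be based, but every difference set is equivalent to a based one (just translate), so by replacing $\cD'$ with a translate we may assume $0\in \cD'$ from the outset; equivalently, we can freely arrange basedness by the translation argument used in step two below.

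Next, let $d:=\go'(C)\in\cD'$. Set $\cD:=\cD'-d=\{d'-d:d'\in\cD'\}$. Then $0\in \cD$ so $\cD$ is based, and $\cD$ is equivalent to $\cD'$ via the parameters $r=1$, $x=-d$. By the paragraph on \textbf{Equivalent Difference Sets} and the discussion in \textbf{The Singer Cyclic Triangle $\gd\backslash\cT(q)$}, the map
\[\go'':\gd\backslash\mathcal{T}(\cD')\to \gd\backslash\mathcal{T}(\cD),\ \ d'\mapsto d'-d\]
preserves flowers (by \autoref{prop:flowerfor3}) and is therefore an isomorphism of Weyl graphs. By construction $\go''(d)=0$. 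Taking $\go:=\go''\circ\go'$ yields an isomorphism $\cP\to\gd\backslash\mathcal{T}(\cD)$ with $\go(C)=\go''(d)=0$, as required.

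There is no real obstacle here: the content is entirely bookkeeping with the correspondence between difference sets (up to equivalence) and panel-regular cyclic actions on generalized triangles. The only small subtlety is making sure that the translation $\cD'\mapsto \cD'-d$ genuinely induces an isomorphism at the level of $\gd\backslash\mathcal{T}(-)$ that sends $d\in\cD'$ to $0\in\cD$, which is precisely what the flower-preservation computation after \autoref{prop:flowerfor3} establishes.
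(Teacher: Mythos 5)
Your proof is correct and takes essentially the same route as the paper: realize $\cP$ as $\gd\backslash\cT(\cD')$ via a difference set obtained from the deck-transformation action, then translate by $\go'(C)$ to get a based difference set $\cD=\cD'-\go'(C)$ and compose with the induced isomorphism $\gd\backslash\cT(\cD')\to\gd\backslash\cT(\cD)$. The aside about arranging $0\in\cD'$ in advance is unnecessary (the translation by $\go'(C)\in\cD'$ already produces a based $\cD$), but it does no harm.
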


\begin{proof}
	Let $g$ be a generator of the deck transformation group of the universal cover $\Delta\rightarrow \cP$. Let $\cD'$ be a difference set obtained from this action. Then there exists an isomorphism $\go': \cP \to \gd \backslash \cT(\cD')$. Let $\cD=\cD'-\go'(C)$, and let $\go'$ be the isomorphism
	\[\go': \gd \backslash \cT(\cD')\to \gd \backslash \cT(\cD), \qquad  x\mapsto x-\go'(C).\] 
	 Then we can take $\go=\go''  \circ \go'$. 
\end{proof}

We now calculate the universal group of $\gd \backslash \cT(\cD)$. We assume that $\cD$ is a based difference set. Recall that $\cD^{\ast}=\cD\setminus \{0\}$. First, we need to equip $\gd \backslash \cT(\cD)$ with a generating set $\cS=\cB\sqcup \cI$. Since each panel groupoid of $\gd \backslash \cT(\cD)$ is a setoid, we have $\cB=\emptyset$. For $n\in \cD^{\ast}$ and $\sigma\in \{s,t\}$, let $g_{(n,\sigma)}$ be the edge $0\xrightarrow{\sigma}n$. Then put
\[\cS=\cI=:\big \{    g_{(n,s)} , g_{(n,t)},g_{(n,s)}^{-1} , g_{(n,t)}^{-1}  :  n\in \cD^{\ast}      \big   \}.\] 
 For notational convenience, let $g_{(0,s)}$, $g_{(0,t)}$, $g_{(0,s)}^{-1}$, and $g_{(0,t)}^{-1}$ denote the empty word. It follows from (\ref{eq:3}) that the set of $\cS$-suites of $\gd \backslash \cT(\cD)$ is
\begin{align*}
	\cR=\big\{ g_{(x,s)}g_{(x,t)}^{-1}g_{(y,t)}g_{(y,s)}^{-1}g_{(z,s)} & g_{ (z, t) }^{-1}  g_{(y',t)} g_{ (y', s) }^{-1}  g_{ (x',s)}g_{(x',t)}^{-1}: \\
	& x,y,z,y',x'\in \cD;\ 0\neq x\neq y\neq z;\ y'-x'=x-y+z       \big \}  . 
\end{align*}
We now substitute $a_{(n)}=g_{(n,s)} g_{(n,t)}^{-1}$, for $n\in \cD^{\ast}$. Thus, the new generating set is
\[\cS':=\big \{    g_{(n,s)} , g_{(n,t)},g_{(n,s)}^{-1} , g_{(n,t)}^{-1},a_{(n)}  :  n\in \cD^{\ast}    \big      \}\] 
and, letting $a_{(0)}$ and $a_{(0)}^{-1}$ denote the empty word, a new set of equivalent relations is
\begin{align*}
	\cR':= \big\{ a_{(x)}a_{(y)}^{-1}a_{(z)}a_{(y')}^{-1} a_{(x')}=1  &: x,y,z,y',x'\in \cD;\ 0\neq x\neq y\neq z;\ y'-x'=x-y+z \\
	a_{(n)}=g_{(n,s)} g_{(n,t)}^{-1}   &: n\in \cD^{\ast} \big \} .
\end{align*}
Let us rearrange $a_{(x)}a_{(y)}^{-1}a_{(z)}a_{(y')}^{-1} a_{(x')}=1$ to give
\begin{equation*} \label{eqqq} \tag{$\spadesuit$}
	a_{(x)}a_{(y)}^{-1}a_{(z)}= a_{(x')}^{-1} a_{(y')}
\end{equation*}     
for $x,y,z,y',x'\in \cD$, $0\neq x\neq y\neq z$, and $y'-x'=x-y+z$. Let $e,e'\in \cD$ be the unique integers such that $e-e'=1$, and put $a=a_{(e)} a_{(e')}^{-1}$. In particular, if $1\in \cD$, we just have $a=a_{(1)}=g_{(1,s)} g_{(1,t)}^{-1}$. Fix $n\in \{1,\dots, \gd-1   \}$, and let $c,c'\in \cD$ be the unique integers such that
\[
c-c'=n.
\] 
If $n\neq \gd-1$, let $d,d'\in \cD$ be the unique integers such that 
\[
d-d'=n+1.
\] 
If $n\neq -e$, let $f,f'\in \cD$ be the unique integers such that
\[
f'-f=n+e.
\] 
Now, if $n=\gd-1$, then $c'=e$ and $c=e'$, and so, 
\begin{equation*}  \label{eqqqq} \tag{$\varheart$}
	a_{(c)}a_{(c')}^{-1}a=        a_{(c)} a_{(c')}^{-1} a_{(e)} a_{(e')}^{-1}=1.
\end{equation*}
We now claim that if $n\neq \gd-1$, then
\[   
a_{(c)}a_{(c')}^{-1}a      =a_{(d)} a_{(d')}^{-1} .             
\] 
If $n\neq \gd-1$ but $n= -e$, then $c=0$, $c'=e$, $d=0$, and $e'=d'$, and so
\[   
a_{(c)}a_{(c')}^{-1}a=        a_{(c)} a_{(c')}^{-1} a_{(e)} a_{(e')}^{-1}=        a_{(e')}^{-1}              =a_{(d)} a_{(d')}^{-1} .             
\]
If $n\neq \gd-1$, $n\neq -e$, $c'\neq e$, and $f'\neq e'$, then by two applications of (\ref{eqqq}), we have
\[   
a_{(c)}a_{(c')}^{-1}a=        a_{(c)} a_{(c')}^{-1} a_{(e)} a_{(e')}^{-1}=    a_{(f)}^{-1} a_{(f')}                                   a_{(e')}^{-1}              =a_{(d)} a_{(d')}^{-1} .             
\]
If $n\neq \gd-1$, $n\neq -e$, and $c'=e$, then $c=d$ and $e'-d'$, and so
\[   
a_{(c)}a_{(c')}^{-1}a=        a_{(c)} a_{(c')}^{-1} a_{(e)} a_{(e')}^{-1}=   a_{(c)}  a_{(e')}^{-1}              =a_{(d)} a_{(d')}^{-1} .             
\]
Finally, if $n\neq \gd-1$, $n\neq -e$, and $f'=e'$, then $d=0$ and $f=d'$, and so
\[   
a_{(c)}a_{(c')}^{-1}a=        a_{(c)} a_{(c')}^{-1} a_{(e)} a_{(e')}^{-1}=    a_{(f)}^{-1} a_{(f')}                                   a_{(e')}^{-1}              =a_{(d)} a_{(d')}^{-1} .             
\]
This proves the claim. Therefore, by induction, we have $a^n=a_{(c)}a_{(c')}^{-1}$, and in particular $a^n=a_{(n)}$ if $n\in \cD$. The fact that $a^\gd=1$ then follows from (\ref{eqqqq}). Notice that in the presence of $a^n=a_{(n)}$ and $a^\gd=1$, the relations of the form $a_{(x)}a_{(y)}^{-1}a_{(z)}a_{(y')}^{-1} a_{(x')}=1$ are redundant. Thus a new generating set is
\[\cS'':=\big \{    g_{(n,s)} , g_{(n,t)},g_{(n,s)}^{-1} , g_{(n,t)}^{-1},a  :  n\in \cD^{\ast}     \big    \}\] 
with relations
\[   \cR'':= \big\{  a^{\gd}=1,\ a^n=g_{(n,s)} g_{(n,t)}^{-1}   : n\in \cD^{\ast}   \big \} .\] 
We obtain the following.

\begin{lem} \label{universalgruop1}
Let $\cD$ be a based difference set and let $\gd \backslash \cT(\cD)$ be the Singer cyclic triangle constructed from $\cD$. Then the universal group of $\gd \backslash \cT(\cD)$ is
\[
\FG(\gd \backslash \cT(\cD))=   
\big \la      
g_{(n,\sigma)} ,g_{(n,\sigma)}^{-1}, a  \ | \   a^{\gd}=1, \   a^n=g_{(n,s)} g_{(n,t)}^{-1},\ g_{(n,\sigma)}g_{(n,\sigma)}^{-1}=1   
\big \ra  
\]
for $\sigma\in \{s,t\}$ and $n\in \cD^{\ast}$.  
\end{lem}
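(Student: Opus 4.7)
The plan is to apply the standard recipe (from \cite{nor2}) that computes the universal group of a Weyl graph from generators $\cS = \cB \sqcup \cI$ together with relations coming from its defining suites. Since every panel groupoid of $\gd \backslash \cT(\cD)$ is a setoid, $\cB = \emptyset$, and $\cI$ consists of the edges $g_{(n,\sigma)} : 0 \xrightarrow{\sigma} n$ for $n \in \cD^{\ast}$, $\sigma \in \{s,t\}$, together with their formal inverses. Reading ($\clubsuit$) off the defining suites yields pentagonal $\cS$-relations; after the Tietze substitution $a_{(n)} := g_{(n,s)} g_{(n,t)}^{-1}$ these compress to ($\spadesuit$): $a_{(x)} a_{(y)}^{-1} a_{(z)} = a_{(x')}^{-1} a_{(y')}$ whenever $y' - x' = x - y + z$. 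Now introduce a single generator $a := a_{(e)} a_{(e')}^{-1}$, where $e, e' \in \cD$ is the unique pair with $e - e' = 1$.

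The central step is an induction on $n \in \{1, \ldots, \gd - 1\}$ showing
\[ a^n = a_{(c)} a_{(c')}^{-1} \]
where $c, c' \in \cD$ is the unique pair with $c - c' = n$. The inductive step multiplies on the right by $a$ and applies ($\spadesuit$) to rewrite $a_{(c)} a_{(c')}^{-1} a_{(e)} a_{(e')}^{-1}$ as $a_{(d)} a_{(d')}^{-1}$ with $d - d' = n + 1$, while the terminal wrap-around case $n = \gd - 1$ (where $c' = e$ and $c = e'$) forces $a^{\gd} = 1$, giving ($\varheart$).

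The main obstacle is the case analysis inside the inductive step: because ($\spadesuit$) carries admissibility constraints ($0 \neq x \neq y \neq z$, together with existence of an auxiliary pair $f, f' \in \cD$ satisfying $f' - f = n + e$), one must split according to whether any of the indices $c', f', d$ collide with the distinguished elements $e, e', 0$. I would handle these exactly as in the bookkeeping preceding the lemma statement, verifying in each degenerate sub-case that the identity $a_{(c)} a_{(c')}^{-1} a = a_{(d)} a_{(d')}^{-1}$ still holds -- either via a specialized instance of ($\spadesuit$) or, in the truly collapsed configurations, by directly cancelling $a_{(0)} = 1$ factors.

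Finally, with $a^{\gd} = 1$ and $a^n = a_{(c)} a_{(c')}^{-1}$ established, specializing to $n \in \cD^{\ast}$ produces the relations $a^n = g_{(n,s)} g_{(n,t)}^{-1}$ displayed in the lemma. Conversely every instance of ($\spadesuit$) now holds because both sides equal $a^{x - y + z}$, so the pentagonal relations are redundant. Performing the Tietze transformations -- adjoining $a$, eliminating the $a_{(n)}$ in favor of the $g_{(n,\sigma)}$, and discarding redundant relations -- leaves precisely the presentation of $\FG(\gd \backslash \cT(\cD))$ asserted in the statement.
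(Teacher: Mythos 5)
Your proposal is correct and follows the paper's own argument essentially step for step: the same Tietze substitutions $a_{(n)} = g_{(n,s)}g_{(n,t)}^{-1}$ and $a = a_{(e)}a_{(e')}^{-1}$, the same inductive claim $a^n = a_{(c)}a_{(c')}^{-1}$ proved by multiplying by $a$ and applying $(\spadesuit)$ with a case split on degenerate collisions, and the same derivation of $a^{\gd}=1$ from the $n=\gd-1$ wrap-around. Your observation that the pentagonal relations become redundant because both sides of $(\spadesuit)$ reduce to $a^{x-y+z}$ once $a_{(x)}=a^x$ is established is a slightly more explicit justification than the paper's bare ``Notice that\dots the relations\dots are redundant,'' but it is the same reasoning.
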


Let $T$ be the spanning tree
\[T:=\big \{g_{(n,t)}, g_{(n,t)} ^{-1}: n\in \cD^{\ast}\big  \}.\] 
Thus, we recover the fundamental group of $\gd \backslash \cT(\cD)$,
\[\pi_1(\gd \backslash \cT(\cD),T)  =  \big\la   g_{(n,s)} ,g_{(n,s)}^{-1}, a \ | \          
a^{\gd}=1,  \     a^n=g_{(n,s)} ,\ g_{(n,s)}g_{(n,s)}^{-1}=1
\big\ra\sim \big \la  a \ | \ a^{\gd}=1 \big\ra.\] 
Notice that the image of the gallery $[ 0\xrightarrow{s} n \xrightarrow{t} 0   ]$ in the fundamental group at $T$ is $a^n$, see \autoref{fig:fano013}. 

\section{Singer Cyclic Lattices of Type $M$} \label{sec:Singer Cyclic Lattices of Type $M$}

We now construct the Singer cyclic lattices of type $M$, where $m_{st}\in\{2,3,\infty\}$ for all distinct $s,t\in S$, and the defining graph of $M$ is connected. Modulo an equivalence relation on our construction, this classifies all such lattices. This equivalence is described in the $\wt{A}_2$ case in terms of `based difference matrices' by Witzel \cite{witzel2016panel}, which builds on the work of Essert \cite{essert2013geometric}. 

\subsection{Gluing Matrices $\cM$}

Let $M=(m_{st})_{s,t\in S}$ be a Coxeter matrix on a set $S$ such that $m_{st}\in\{2,3,\infty\}$ for all $s,t\in S$, $s\neq t$, and whose defining graph $L$ is connected. Gluing matrices will play the same role as based difference matrices in the work of Essert. Let $q\in \bZ_{\geq 2}$ and put $k:=q+1$. Let 
\[
\cC:=\bZ/k\bZ\qquad  \text{and} \qquad  \cC^{\ast}:=\{ 1,\dots,q \}\subset \cC
.\] 
Let $\bar{E}$ be an orientation of $L$; formally, let $\bar{E}\subseteq S\times S$ such that for all $(s,t)\in \bar{E}$, we have $\{s,t\}\in E(L)$, and for all $\{s,t\}\in E(L)$, exactly one of $\{(s,t),(t,s)\}$ is contained in $\bar{E}$. We define a \emph{gluing matrix} $\cM=\cM_{M,q}$ of type $M$ and order $q$ to be a matrix
\[          \cM:  \cC^{\ast} \times      \bar{E}\to \bZ_{\geq 1}    \]
such that
\begin{enumerate}[label=(\roman*)]
\item
for $(s,t)\in \bar{E}$ with $m_{st}=2$, each $\cC^{\ast}$-tuple $\cM(  -,(s,t) )$ is a permutation of $\cC^{\ast}$ 
\item
for $(s,t)\in \bar{E}$ with $m_{st}=3$,  each $\cC^{\ast}$-tuple $\cM(  -,(s,t) )$ is a permutation of some $\cD^{\ast}$, where $\cD$ is a based difference set of order $q$.
\end{enumerate}
Let us denote $\cM(n,(s,t))$ by $n(st)$. If $m_{st}=2$, we think of the integer $n(st)$ as being an element of $\bZ/k\bZ$, and if $m_{st}=3$, we think of the integer $n(st)$ as being an element of $\bZ/\gd\bZ$.  

\begin{figure}[t]
\centering
\includegraphics[scale=1]{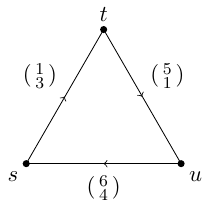}
\caption{A gluing matrix of type $\wt{A}_2$ and order $2$}
\label{fig:glueingdataA22undir}
\end{figure}

\begin{ex}
Let $L$ be the defining graph of $\wt{A}_2$. \autoref{fig:glueingdataA22undir} shows $L$ decorated with the data of the following gluing matrix $\cM=\cM_{L,2}$,
\begin{center}
\begin{tabular}{ c| c c c }
& $(s,t)$ & $(t,u)$ & $(u,s)$ \\ 
\hline
$1$ & $1$ & $5$ & $6$ \\  
$2$ & $3$ & $1$ & $4$    
\end{tabular}
\end{center}
Notice that each column is of the form $\cD^{\ast}$ for some based difference set $\cD$ of order $2$.
\end{ex}

\subsection{The Singer Graph $\cW_\cM$} 

Let $L$, $\bar{E}$, $q$, $k$, $\cC$, and $\cC^{\ast}$ be as above, and let $\cM$ be a gluing matrix of order $q$. We associated a Singer graph $\cW=\cW_\cM$ of type $M$ to $\cM$ as follows. The set of chambers is $\cW_0=\cC$, and each panel groupoid is $\cW_s\cong  1\times k$, for $s\in S$. Let $(s,t)\in \bar{E}$, and let $J=\{s,t\}$. Notice that there is exactly one $J$-residue $\cW_J$ of $\cW$. 

If $m_{st}=2$, let $\Omega_{st}:\cW_J \to k\backslash \mathbf{D}(q)$ be the chamber system morphism such that for $x\in \cW_0$, we have
\[   
\Omega_{st}(x):= 
\begin{cases}
0 &\quad\text{if}\   x=0 \\
x(st) &\quad\text{otherwise.} 
\end{cases}
\]
Then, let the defining suites of $\cW_{J}$ be the $\Omega_{st}$-preimages of the defining suites of $k\backslash \mathbf{D}(q)$. Thus, the defining suites are
\[
 [0 \xrightarrow{s} y\xrightarrow{t} z \xrightarrow{s} y'\xrightarrow{t} 0  ]    
\] 
where $y,z,y'\in\cC$, $0\neq y\neq z$, and $\Omega_{st}(y')=\Omega_{st}(z)-\Omega_{st}(y)$. 

If $m_{st}=3$, put $\cD=\cM(  -,(s,t) )\cup \{0\}$, and let $\Omega_{st}:\cW_J \to \gd\backslash \cT(\cD)$ be the chamber system morphism such that for $x\in \cW_0$, we have
\[   
\Omega_{st}(x):= 
\begin{cases}
0 &\quad\text{if}\   x=0 \\
x(st) &\quad\text{otherwise.} 
\end{cases}
\]
Then, let the defining suites of $\cW_J$ be the $\Omega_{st}$-preimages of the defining suites of $\gd\backslash \cT(\cD)$. Thus, the defining suites are
\[ [0 \xrightarrow{s} x\xrightarrow{t} y \xrightarrow{s} z\xrightarrow{t} y' \xrightarrow{s} x' \xrightarrow{t} 0  ]    \] 
where $y,z,y',x'\in\cC$, $0\neq x \neq  y\neq z$, and 
\[\Omega_{st}(y')-\Omega_{st}(x')=\Omega_{st}(x)-\Omega_{st}(y)+\Omega_{st}(z).\] 
This defines the Weyl data $\cW$. Then $\cW$ is a Weyl graph by the generalization of Tits's local-to-global result \cite[Theorem 5.13]{nor2}.

Let $\cW=\cW_\cM$ be as constructed above. We now calculate the universal group of $\cW$. First, we need a generating set $\cS=\cB \sqcup \cI$. Since each panel groupoid of $\cW$ is a setoid, we have $\cB=\emptyset$. For $n\in \cC^{\ast}$ and $\sigma\in S$, let $g_{(n,\sigma)}$ be the edge $0\xrightarrow{\sigma}n$. Then put
\[\cS=\cI:=\big \{    g_{(n,\gs)},g_{(n,\gs)}^{-1}  :  n\in \cC^{\ast},  \gs\in S     \big   \}.\]
Then the set of $\cS$-suites $\cR$ of $\cW$ is the union of the $\cS$-suites in each of its spherical $2$-residues,
\begin{align*}
\cR= \big   \{\ &  g_{(y,s)}  g_{(y,t)}^{-1} g_{(z,t)} g_{ (z, s) }^{-1} g_{ (y',s)}   g_{(z-y,t)}^{-1}  \\
& g_{(x,u)}  g_{(x,v)}^{-1}g_{(y,v)}g_{(y,u)}^{-1}g_{(z,u)}  g_{ (z, v) }^{-1}  g_{(y',v)} g_{ (y', u) }^{-1}  g_{ (x',u)}g_{(x',v)}^{-1}\ \big   \} 
\end{align*}
where $(s,t)\in \bar{E}$ with $m_{st}=2$, $y,z\in  \cC$ with $0\neq y \neq z$, and 
\[\Omega_{st}(y')=\Omega_{st}(z)-\Omega_{st}(y)\] 
and $(u,v)\in \bar{E}$ with $m_{uv}=3$, $x,y,z,y',x'\in \cC$ with $0\neq x\neq y\neq z$, and
\[\Omega_{st}(y')-\Omega_{st}(x')=\Omega_{st}(x)-\Omega_{st}(y)+\Omega_{st}(z).\]
We now make the same substitutions in each spherical $2$-residue of $\cW$ that we made in order to obtain the presentations of \autoref{universalgruop2} and \autoref{universalgruop1}. For $(s,t)\in \bar{E}$ with $m_{st}=2$, let $n\in \cC^{\ast}$ such that $\Omega_{st}(n)=1$. Then put
\[a_{st}= g_{(n,s)} g_{(n,t)}^{-1}.\] 
For $(s,t)\in \bar{E}$ with $m_{st}=3$, let $n,n'\in \cC$ such that $\Omega_{st}(n)- \Omega_{st}(n')=1$. For notational convenience, let $g_{(0,\gs)}$ denote the empty word for all $\sigma\in S$, and put
\[
a_{st}=    g_{(n,s)} g_{(n,t)}^{-1} (g_{(n',s)} g_{(n',t)}^{-1})^{-1}          .
\]
Thus, for both $m_{st}=2$ and $m_{st}=3$, $a_{st}$ is the `$a$' from the calculation of the universal group of the target of $\Omega_{st}$. Let us also stop including the $g_{(n,s)}^{-1}$ as generators for simplicity, which is obviously possible. Then the new set of generators is
\[   \cS':=\big \{    g_{(n,\gs)},a_{st}  :  n\in \cC^{\ast};\  \gs\in S;\ (s,t)\in \bar{E}     \big   \}\]
and it follows from \autoref{universalgruop2} and \autoref{universalgruop1} that a new set of equivalent relations is
\[
\cR':= \big \{  (a_{st})^{\gd(st)}=1        ,\  (a_{st})^{n(st)}=g_{(n,s)} g_{(n,t)}^{-1}   :  (s,t)\in \bar{E};\ n\in \cC^{\ast}  \big \}
\]
where for $(s,t)\in \bar{E}$, we have
\[   
\gd(st) = 
\begin{cases}
k=q+1 &\quad\text{if}\  m_{st}=2 \\
\gd=q^{2}+q+1 &\quad\text{if}\  m_{st}=3.  
\end{cases}
\]
Thus, we obtain the following.

\begin{lem} \label{universalgruop1}
Let $\cM$ be a gluing matrix and let $\cW=\cW_{\cM}$ be the Weyl graph associated to $\cM$. Then the universal group of $\cW$ is
\[
\FG(\cW)=   \big \la     g_{(n,\gs)},a_{st}     \ |  \  
(a_{st})^{\gd(st)}=1        ,\  (a_{st})^{n(st)}=g_{(n,s)} g_{(n,t)}^{-1}      \big \ra  
\]
for  $n\in \cC^{\ast}$, $\gs\in S$, and $(s,t)\in \bar{E}$. 
\end{lem}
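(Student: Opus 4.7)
The plan is to apply the general method for computing universal groups of Weyl graphs from \cite{nor2} while leveraging the fact that the non-trivial suites of $\cW$ all lie in spherical $2$-residues, so the computation reduces, residue-by-residue, to the two lemmas already established for $k\backslash\bD(q)$ and $\gd\backslash\cT(\cD)$.

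First I would fix the generating set $\cS = \cI = \{g_{(n,\gs)}, g_{(n,\gs)}^{-1} : n\in \cC^{\ast},\ \gs\in S\}$; this generates because every panel groupoid of $\cW$ is a setoid and the chamber $0$ is $\gs$-adjacent to every other chamber for each $\gs\in S$. The raw presentation then has relations consisting of the inverse relations $g_{(n,\gs)} g_{(n,\gs)}^{-1} = 1$ together with the $\cS$-suites of $\cW$. By the construction of $\cW_{\cM}$ and the local-to-global result used to build it, the set of $\cS$-suites of $\cW$ decomposes as the disjoint union, over $(s,t)\in\bar{E}$, of the $\cS$-suites of the $\{s,t\}$-residue $\cW_{\{s,t\}}$.

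Next I would handle each oriented edge $(s,t)\in\bar{E}$ separately. By construction, $\cW_{\{s,t\}}$ is the $\Omega_{st}$-preimage of a Singer cyclic digon or triangle, so its defining suites pull back exactly the defining suites from $k\backslash \bD(q)$ or $\gd\backslash\mathcal{T}(\cD)$ under the relabeling $n\mapsto n(st)$. I would then perform verbatim the Tietze transformations used in \autoref{universalgruop2} and in the preceding lemma for $\gd\backslash\mathcal{T}(\cD)$: introduce the element $a_{st}$ exactly as defined just before the statement (so that $a_{st}$ plays the role of the ``$a$'' in the relevant local computation), derive inductively, using the digon relations (when $m_{st}=2$) or the difference-set relations \eqref{eqqq} (when $m_{st}=3$), that
\[
(a_{st})^{n(st)} = g_{(n,s)} g_{(n,t)}^{-1} \quad\text{for all } n\in \cC^{\ast},
\]
together with the order relation $(a_{st})^{\gd(st)}=1$. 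Once these are established, the original $\cS$-suite relations in $\cW_{\{s,t\}}$ become redundant, and the generators $g_{(n,s)}^{-1}$ may be dropped in favor of $g_{(n,s)}$.

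Combining these local reductions across all $(s,t)\in\bar{E}$, and recalling that the substitutions for different oriented edges involve disjoint sets of $g$-generators and introduce distinct $a_{st}$'s, yields the presentation claimed in the statement. The main technical point that needs justification is the decomposition of the $\cS$-suites as the union over spherical $2$-residues: this is where Tits' local-to-global result enters, ensuring that no additional global relations are needed beyond those supported in the $2$-residues. The remainder is bookkeeping combining two Tietze computations that have already been carried out in full detail in the preceding sections.
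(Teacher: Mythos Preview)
Your proposal is correct and follows exactly the argument the paper gives in the paragraphs immediately preceding the lemma: fix the generating set $\cS=\cI$, observe that the $\cS$-suites of $\cW$ are the union over $(s,t)\in\bar{E}$ of the $\cS$-suites in each spherical $2$-residue, and then apply residue-by-residue the Tietze transformations already carried out in \autoref{universalgruop2} and the triangle computation to introduce each $a_{st}$ and obtain the stated relations. One small slip: the $g$-generators for different oriented edges are \emph{not} disjoint (edges $(s,t)$ and $(t,u)$ both involve $g_{(n,t)}$); what makes the local reductions combine cleanly is rather that each substitution only \emph{introduces} the new generator $a_{st}$ and rewrites the relations supported in that residue, without eliminating any shared $g_{(n,\gs)}$.
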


We define an \emph{$L$-cycle} to be a sequence $\gs_1,\dots,\gs_{\mu}$ of adjacent vertices of $L$ with $\gs_1=\gs_{\mu}$. We denote the set of $L$-cycles in $L$ by $\Cir(L)$. If $(s,t)\notin \bar{E}$, let $a_{st}=a_{ts}^{-1}\in \FG(\cW)$. Then for each $L$-cycle $\gs_1,\dots,\gs_{\mu}$, and for each $n\in \cC^{\ast}$, we have
\begin{align*}
&(a_{\gs_1\gs_2})^{n(\gs_1\gs_2)} (a_{\gs_2\gs_3})^{n(\gs_2\gs_3)} \dots (a_{\gs_{\mu-1}\gs_{\mu}})^{n(\gs_{\mu-1}\gs_{\mu})} \\
=\ & g_{(n,\gs_1)} g_{(n,\gs_2)}^{-1} g_{(n,\gs_2)} g_{(n,\gs_3)}^{-1}  \dots      g_{(n,\gs_{\mu-1})} g_{(n,\gs_{\mu})}^{-1} && \text{since } (a_{st})^{n(st)}=g_{(n,s)} g_{(n,t)}^{-1}\\
=\ & 1 && \text{since } \gs_1=\gs_{\mu}.
\end{align*}
Therefore $(a_{\gs_1\gs_2})^{n(\gs_1\gs_2)} (a_{\gs_2\gs_3})^{n(\gs_2\gs_3)} \dots (a_{\gs_{\mu-1}\gs_{\mu}})^{n(\gs_{\mu-1}\gs_{\mu})}=1$ is a consequence of the relations of $\FG(\cW)$, which we call the relation \emph{induced} by $\gs_1,\dots,\gs_{\mu}$. 

\subsection{The Singer Lattice $\Gamma_\cM$.} 

We now obtain a presentation of the fundamental group of $\cW=\cW_{\cM}$ whose generators are the $a_{st}$'s. Fix $\xi \in S$, and let us assume that $\bar{E}$ has been chosen such that for all $u\in S$, if $\{u,\xi\}\in E(L)$, then $(u,\xi)\in \bar{E}$, i.e. the oriented edges at $\xi$ terminate at $\xi$. Let $T$ be the spanning tree 
\[
T:=\big \{ g_{(n,\xi)}, g_{(n,\xi)}^{-1}: n\in \cC^{\ast} \big  \}
.\] 
Then by quotienting out $T$ in $\FG(\cW)$, we obtain
\begin{align*}
	\pi_1(\cW,T)=   \big \la     g_{(n,\gs)},a_{st}, & a_{u \xi}     \ |  \  \\
	(a_{st})^{\gd(st)} & =(a_{u \xi})^{\gd(u \xi)}=1        ,\  (a_{st})^{n(st)}=g_{(n,s)} g_{(n,t)}^{-1},\  (a_{u \xi})^{n(u \xi)}=g_{(n,u)}         \big \ra  
\end{align*}
for  $n\in \cC^{\ast}$, $\gs\in S\setminus \{ \xi \}$, $(s,t)\in \bar{E}$ with $s,t\neq \xi$, and $(u,\xi)\in \bar{E}(L)$.  

For $s,t\in S$, if $(s,t)\notin \bar{E}$, let $a_{st}=a_{ts}^{-1}\in \pi_1(\cW,T)$. For $\gs\in S$, let $\gs,\gs_1,\dots,\gs_{\mu},\xi$ be a sequence of adjacent vertices of $L$ (recall that $L$ is connected). For $n\in \cC^{\ast}$, we have $(a_{ \gs_{\mu} \xi })^{n(\gs_{\mu} \xi )}=g_{(n,\gs_\mu)}$, and so
\begin{align*} 
g_{(n,\gs)}   = \   &   g_{(n,\gs)}     g_{(n,\gs_1)}^{-1} g_{(n,\gs_1)}     \dots    g_{(n,\gs_\mu)}^{-1}  g_{(n,\gs_\mu)}      \\        
= \ & (a_{\gs \gs_1})^{n(\gs \gs_1)} (a_{\gs_1 \gs_2})^{n(  \gs_1 \gs_2)}\dots   (a_{\gs_{\mu-1} \gs_{\mu}})^{n(  \gs_{\mu-1} \gs_{\mu})}    (a_{ \gs_{\mu} \xi })^{n(\gs_{\mu} \xi )}      .   
\end{align*}
In the presence of relations induced by $L$-cycles, the relations of the form $(a_{st})^{n(st)}=g_{(n,s)} g_{(n,t)}^{-1}$ are redundant since, after substitution, the relation is induced by an $L$-cycle of the form
\[  \xi, \gs_{\mu},\dots, \gs_1,t,      s,\gs'_1,\dots,\gs'_{\mu},\xi.\]
Therefore we can drop the $g_{(n,\gs)}$ from the generating set of $\pi_1(\cW,T)$ and include the relations induced by $L$-cycles to obtain an equivalent group presentation of $\pi_1(\cW,T)$.

\begin{thm} \label{mainmain}
Let $\cM$ be a gluing matrix and let $\cW=\cW_{\cM}$ be the Weyl graph associated to $\cM$. Then the fundamental group $\Gamma=\Gamma_{\cM}$ of $\cW$ has the presentation
\[\Gamma=   \big \la    a_{st},a_{ts}   \ |\  a_{st}a_{ts}  = (a_{st})^{\gd(st)} =(a_{\gs_1\gs_2})^{n(\gs_1\gs_2)}\dots(a_{\gs_{\mu-1}\gs_{\mu}})^{n(\gs_{\mu-1}\gs_{\mu})}=1         \big \ra  \]
for $(s,t)\in \bar{E}$, $\ n\in \cC^{\ast}$, and $\gs_1,\dots,\gs_{\mu}\in \Cir(L)$. Thus, $\Gamma$ is a Singer cyclic lattice in a building $\Delta=\Delta_{\cM}$ of type $M$, with $\Gamma\backslash \Delta\cong \cW$.  
\end{thm}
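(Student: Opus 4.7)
The plan is to read the presentation off from the Tietze transformation already performed in the paragraphs above the theorem. We start from the presentation of $\pi_1(\cW,T)$ displayed just above, whose generators are the $g_{(n,\sigma)}$ together with the $a_{st}$ and $a_{u\xi}$, and whose relations split into three families: the cyclic-order relations $(a_{st})^{\gd(st)}=1$; the spoke relations $(a_{u\xi})^{n(u\xi)}=g_{(n,u)}$ for $(u,\xi)\in\bar{E}$; and the internal relations $(a_{st})^{n(st)}=g_{(n,s)}g_{(n,t)}^{-1}$ for $(s,t)\in\bar{E}$ with $s,t\neq\xi$. Using connectedness of $L$, for each $\sigma\in S\setminus\{\xi\}$ fix a path $\sigma=\sigma_0,\dots,\sigma_\mu=\xi$ in $L$; the telescoping identity displayed above the theorem then expresses $g_{(n,\sigma)}$ as the product $\prod_{i=1}^{\mu}(a_{\sigma_{i-1}\sigma_i})^{n(\sigma_{i-1}\sigma_i)}$, with the convention $a_{\tau\rho}:=a_{\rho\tau}^{-1}$ when $(\tau,\rho)\notin\bar{E}$. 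Substituting these expressions eliminates every $g_{(n,\sigma)}$ from the generating set.

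It then remains to track what the three relation families become. The spoke relations are the special case $\mu=1$ of the path expressions themselves, so they collapse to tautologies and can be discarded. The cyclic-order relations $(a_{st})^{\gd(st)}=1$ are unaffected. By the calculation in the last display before the theorem, each internal relation rearranges, after substitution, into the $L$-cycle relation induced by the closed walk $\xi,\sigma_\mu,\dots,\sigma_1,s,t,\sigma'_1,\dots,\sigma'_\mu,\xi$. Hence every internal relation is a consequence of the $L$-cycle relations; conversely, the telescoping identity just above the theorem shows that every $L$-cycle relation is a consequence of the original relations of $\FG(\cW)$, so adopting the full collection of $L$-cycle relations is a valid (if partially redundant) replacement. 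Together with the convention $a_{st}a_{ts}=1$ encoding $a_{ts}:=a_{st}^{-1}$ for $(s,t)\in\bar{E}$, we arrive at the presentation stated.

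For the final assertion, $\cW_\cM$ is a Singer graph because each of its spherical $2$-residues is, by construction, a Singer cyclic digon or triangle from Section~3, and every panel groupoid of $\cW_\cM$ is a setoid on $\cC$. The correspondence between Weyl graphs and type-preserving chamber-free actions established in \cite{nor2} then furnishes a universal cover $\Delta_\cM:=\wt{\cW}_\cM$, which is a building of type $M$, equipped with a type-preserving action of $\Gamma_\cM=\pi_1(\cW_\cM)$ whose quotient is $\cW_\cM$. The action is panel-regular of order $q$ because the panel groupoids are setoids on $\cC$, so $\Gamma_\cM$ is a Singer cyclic lattice. The main obstacle is the bookkeeping in the second paragraph — matching the substituted word against the $L$-cycle word on the nose — but this is precisely the identity already recorded before the theorem, so the proof amounts to collecting the computations already performed.
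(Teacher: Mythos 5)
Your proposal reproduces the paper's own argument: a Tietze transformation that quotients out the spanning tree $T$ at the chosen vertex $\xi$, eliminates the $g_{(n,\sigma)}$ via paths in $L$ to $\xi$ (so that the spoke relations collapse), and then trades the surviving relations $(a_{st})^{n(st)}=g_{(n,s)}g_{(n,t)}^{-1}$ for the $L$-cycle relations, with the converse implication supplied by the telescoping identity established in the ``$L$-Cycles'' paragraph. The only difference is presentational --- you track the three relation families more explicitly and spell out the two-way replacement, and you fill in a short justification for the final assertion about $\Gamma$ being a Singer cyclic lattice, which the paper leaves to the correspondence in \cite{nor2}; neither constitutes a genuinely different route.
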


The number of relations induced by $L$-cycles that need to be included in order to obtain a sufficient set of relations will depend upon the homotopy type of $L$.

\subsection{Classification of Singer Cyclic Lattices of type $M$}

Let $M$ be a Coxeter matrix with $m_{st}\in\{2,3,\infty\}$ for all distinct $s,t\in S$, whose defining graph $L$ is connected. We now show that every Singer cyclic lattice 
\[\gG< \Aut(\Delta)\] 
in a building $\Delta$ of type $M$ is equivalent to a Singer cyclic lattice of the form $\gG_\cM< \Aut(\Delta_\cM)$, for some gluing matrix $\cM$ of type $M$.   

Let $\Delta$ be a locally finite building of type $M$, and let $\Gamma< \Aut(\Delta)$ be a Singer cyclic lattice in $\Delta$ of order $q$. Then $\Gamma\backslash \Delta$ is a Singer graph of order $q$ with spherical Weyl polygons either the digon $k \backslash \mathbf{D}(q)$, the triangle $\gd \backslash \cT(q)$, or (if they exist) non-Desarguesian Singer cyclic triangles. Therefore the only choices $\Gamma\backslash \Delta$ has are in how these polygons are glued together. We now show that gluing matrices encode all the possible choices, and so $\gG\backslash \Delta  \cong  \cW_{\cM}$ for some gluing matrix $\cM$.    

Let $\cC= \bZ/k\bZ$, and let $\bar{E}$ be an orientation of $L$. By choosing a bijection $(\Gamma\backslash \Delta)_0\to \cC$, let us identify the chambers of $\Gamma\backslash \Delta$ with $\cC$. We now define a gluing matrix 
\[\cM=\cM(\gG):  \cC^{\ast} \times      \bar{E}\to \bZ_{\geq 1}\] 
of type $M$ and order $q$ as follows. For each $(s,t)\in \bar{E}$ such that $m_{st}=2$, by \autoref{cor:rotatingdigon}, there exists a permutation $\cC\to \cC$ with $0\mapsto 0$ which is an isomorphism on the $\{s,t\}$-residue of $\Gamma\backslash \Delta$ into $k\backslash \mathbf{D}(q)$. Let $\cM(-,(s,t))$ be the restriction of the permutation $\cC\to \cC$ to $\cC^{\ast}$. For each $(s,t)\in \bar{E}$ such that $m_{st}=3$, by \autoref{canbe0}, there exists a based difference set $\cD$ and a bijection $\cC\to \cD$ with $0\mapsto 0$ which is an isomorphism on the $\{s,t\}$-residue of $\Gamma\backslash \Delta$ into $\gd\backslash \cT(\cD)$. Let $\cM(-,(s,t))$ be the restriction of $\cC\to \cD$ to $\cC^{\ast}$. We call $\cM$ the gluing matrix \emph{associated} to $\Gamma$. Then we obtain the following.

\begin{thm}
Let $M$ be a Coxeter matrix with $m_{st}\in\{2,3,\infty\}$ for all $s,t\in S$, $s\neq t$, whose defining graph is connected. Let $\Delta$ be a locally finite building of type $M$, and let $\Gamma< \Aut(\Delta)$ be a Singer cyclic lattice in $\Delta$. Let $\cM=\cM(\gG)$ be a gluing matrix associated to $\Gamma$. Then there exists an isomorphism $\go:\gG\backslash \Delta   \to   \cW_{\cM} $.
\end{thm}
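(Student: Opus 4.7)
The plan is to take the bijection $(\Gamma\backslash\Delta)_0 \to \cC$ fixed during the construction of $\cM = \cM(\Gamma)$ and verify that the resulting identification of chambers upgrades to a full isomorphism of Weyl graphs $\omega: \Gamma\backslash\Delta \to \cW_\cM$. Since $\cW_\cM$ was built via Tits' local-to-global principle out of chamber-system data together with defining suites on spherical $2$-residues, it suffices to check (i) that the identification is a chamber-system isomorphism and (ii) that on every spherical $2$-residue the two defining-suite collections coincide.

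For (i), $\Gamma$ acts panel-regularly of order $q$ on $\Delta$, so each panel groupoid of $\Gamma\backslash\Delta$ is a connected setoid on $k = q+1$ chambers, and the same holds of $\cW_\cM$ by construction; thus the identity on $\cC$, together with the evident matching of panel groupoids of each type, is a chamber-system isomorphism. For (ii), fix $(s,t) \in \bar E$ and put $J = \{s,t\}$. The column $\cM(-,(s,t))$ was defined precisely as $\omega_{st}|_{\cC^{\ast}}$, where $\omega_{st}$ is the isomorphism supplied by \autoref{cor:rotatingdigon} (if $m_{st}=2$) or \autoref{canbe0} (if $m_{st}=3$), going from the $J$-residue of $\Gamma\backslash\Delta$ to the corresponding standard Singer cyclic polygon ($k\backslash\bD(q)$ or $\gd\backslash\cT(\cD)$ for some based difference set $\cD$) with $0 \mapsto 0$. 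On the $\cW_\cM$-side the defining suites of the $J$-residue are declared to be the $\Omega_{st}$-preimages of the defining suites of this same standard polygon, and by the very definition of $\Omega_{st}$ in terms of $\cM$, the map $\Omega_{st}$ agrees with $\omega_{st}$ on chambers. Hence the two defining-suite collections on the $J$-residue are literally the same set of cycles in $\cC$. For pairs with $m_{st} = \infty$ the $2$-residue carries no defining suites; for $(t,s) \notin \bar E$ the $J$-residue is the same object viewed from the opposite orientation, so no further check is required.

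Having matched the chamber-system data and the defining suites on every spherical $2$-residue, the local-to-global principle that was used to construct $\cW_\cM$ in the first place now produces the required isomorphism $\omega$. The main subtlety---not an obstacle to the statement itself---is that the choices of $\omega_{st}$ used to build $\cM(\Gamma)$ are not canonical, so $\cM(\Gamma)$ depends on them; but any valid choice yields an isomorphism of the claimed form, which is all that the theorem asserts.
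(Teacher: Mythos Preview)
Your proposal is correct and follows the same approach as the paper: take the identity on chambers $\cC\to\cC$ (after the identification fixed when defining $\cM(\Gamma)$) and observe that it is an isomorphism of Weyl graphs because, by construction, the map $\Omega_{st}$ used to define the suites of $\cW_\cM$ coincides with the isomorphism $\omega_{st}$ used to define the column $\cM(-,(s,t))$. The paper's proof is the one-line version of exactly this; you have simply unpacked what ``follows directly from the definition of $\cM$ and the construction of $\cW_\cM$'' means.
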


\begin{proof}
Let $\go:\gG\backslash \Delta   \to   \cW_{\cM} $ be the morphism whose map on chambers is the identity $\cC \to \cC$. The fact that $\go$ is an isomorphism then follows directly from the definition of $\cM$ and the construction of $\cW_{\cM}$.
\end{proof}

This gives a classification of Singer cyclic lattices of type $M$ modulo an equivalence relation on gluing matrices, since different gluing matrices may construct the same lattice. This equivalence for $M=\wt{A}_2$ is described in \cite[Corollary 3.19]{witzel2016panel} in terms of difference matrices. 

\section{Examples of Singer Cyclic Lattices} \label{sec:Examples of Singer Cyclic Lattices}

\begin{figure}[t]
	\centering
	\begin{minipage}{.5\textwidth}
		\centering
		\includegraphics[scale=1]{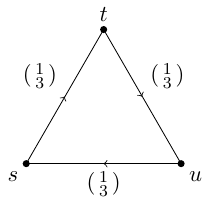}
		\caption{The gluing matrix $\cM$}
		\label{fig:glueingdataA22}
	\end{minipage}%
	\begin{minipage}{.5\textwidth}
		\centering
		\includegraphics[scale=1]{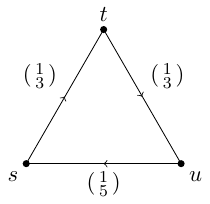}
		\caption{The gluing matrix $\cM'$}
		\label{fig:glueingdataA222}
	\end{minipage}
\end{figure}

\subsection{Singer Cyclic Lattices of Type $\wt{A}_2$}

The Singer cyclic lattices of type $\wt{A}_2$ were constructed in \cite{essert2013geometric}, and classified in \cite{witzel2016panel}. In \cite{witzel2016panel}, it is shown that there are two non-isomorphic Singer lattices of type $\wt{A}_2$ and order $2$. Let us construct these lattices within our framework of Weyl graphs:
\begin{enumerate}[label=(\arabic*)]
	\item
	Take the gluing matrix $\cM$ of type $\wt{A}_2$ shown in \autoref{fig:glueingdataA22}. Put $a=a_{st}$, $b=a_{tu}$, and $c=a_{us}$. Notice that we only need to include one $L$-cycle in the presentation. From \autoref{mainmain}, we obtain
	\[    \Gamma_{\cM}=  \big \la a, b, c\ |\  a^7= b^7= c^7=abc=a^3b^3c^3=1  \big \ra  .     \]
	\item
	Take the gluing matrix $\cM'$ of type $\wt{A}_2$ shown in \autoref{fig:glueingdataA222}. Put $a=a_{st}$, $b=a_{tu}$, and $c=a_{us}$. Then
	\[    \Gamma_{\cM}=  \big \la a, b, c\ |\  a^7= b^7= c^7=abc=a^3b^3c^5=1  \big \ra    .   \]
\end{enumerate}

\subsection{Hyperbolic Singer Cyclic Lattices}

We now construct some hyperbolic examples, i.e. where the Davis realization of an apartment has a natural $\CAT(-1)$ metric which is a hyperbolic space.

\begin{ex}
Let $M$ be the Coxeter matrix on $S=\{s,t,u,v\}$ with 
\[
m_{st}=m_{tu}=m_{uv}=m_{vs}=3\qquad  \text{and}\qquad m_{su}=m_{tv}=\infty
.\] 
The Coxeter group $W$ of type $M$ acts on the hyperbolic plane by tiling with squares whose internal angles are all $\pi/3$. Let $\cM$ be the following gluing matrix of type $M$ and order $3$,
\begin{center}
\begin{tabular}{ c| c c c c }
& $(s,t)$ & $(t,u)$ & $(u,v)$ & $(v,s)$ \\ 
\hline
$1$ & $1$ & $1$ & $1$ & $1$  \\  
$2$ & $3$ & $3$ & $3$ & $3$  \\  
$3$ & $9$ & $9$ & $9$ & $9$ 
\end{tabular}
\end{center} 
Put $a=a_{st}$, $b=a_{tu}$, $c=a_{uv}$, and $d=a_{vs}$. We only need to include one $L$-cycle in the presentation. We obtain
\[    
\Gamma_\cM=  \big \la a, b, c, d\ |\ a^{13}= b^{13}= c^{13}=d^{13}=abcd=a^3b^3c^3d^3  = a^9b^9c^9d^9    =1  \big \ra    
.\]
Then $\Delta_{\cM}$ is a Fuchsian building. In the language of \cite{anneprobsonpolycomp}, $\Gamma_\cM$ is a uniform lattice in the Davis realization of $\Delta_{\cM}$, which is a $(4,L)$-complex, where $L$ is the simplicial building of the projective plane $\PG(2,3)$.
\end{ex}

\begin{ex}
Let $M$ be the Coxeter matrix on $S=\{s,t,u,v\}$ with 
\[
m_{st}=m_{vs}=2, \qquad   m_{tu}=m_{uv}= 3,\qquad m_{su}=m_{tv}=\infty
.\] 
Let $\cM$ be the following gluing matrix of type $M$,
\begin{center}
\begin{tabular}{ c| c c c c }
& $(s,t)$ & $(t,u)$ & $(u,v)$ & $(v,s)$ \\ 
\hline
$1$ & $1$ & $1$ & $1$ & $1$  \\  
$2$ & $2$ & $3$ & $3$ & $2$  \\  
$3$ & $3$ & $9$ & $9$ & $3$ 
\end{tabular}
\end{center}
Put $a=a_{st}$, $b=a_{tu}$, $c=a_{uv}$, and $d=a_{vs}$. Then
\[   \Gamma_\cM=  \big\la     a,b,c,d\ |\      a^4=  b^{13}= c^{13}= d^4  =   abcd =a^2 b^3  c^3  d^2 = a^3 b^9  c^9  d^3 =1   \big\ra .\]
This is a lattice in a Fuchsian building whose Davis realization models apartments as copies of the hyperbolic plane tessellated by Saccheri quadrangles. The links of the Davis realization are the complete bipartite graph on $4+4$ vertices, and the simplicial building of the projective plane $\PG(2,3)$.
\end{ex}

\subsection{Wild Singer Cyclic Lattices}

We finish by constructing two wild Singer cyclic lattices. 

\begin{ex} \label{exx1}
Let $M$ be the Coxeter matrix on $S=\{s,t,u,v\}$ with 
\[
m_{st}=m_{tu}=m_{us}=m_{uv}=3\qquad  \text{and} \qquad  m_{sv}=m_{tv}=\infty
.\] 
Take the gluing matrix $\cM$ of type $M$ and order $2$ shown in \autoref{fig:glueingdata}. Put $a=a_{st}$, $b=a_{tu}$, $c=a_{us}$, and $d=a_{uv}$. We only need to include one $L$-cycle in the presentation. We have
\[    
\Gamma_{\cM}=  \big \la a,b,c,d\ |\ a^7= b^7= c^7=d^7= abc=a^5b^3c^3=1  \big \ra    
.\]
\end{ex}

\begin{figure}[t]
	\centering
	\begin{minipage}{.5\textwidth}
		\centering
		\includegraphics[scale=1]{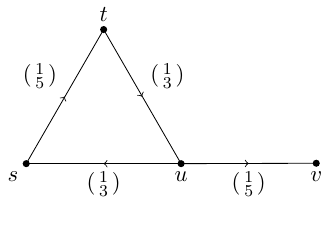}
		\caption[The gluing matrix $\cM$ of \autoref{exx1}]{The gluing matrix $\cM$ of \autoref{exx1}} 
		\label{fig:glueingdata}
	\end{minipage}%
	\begin{minipage}{.5\textwidth}
		\centering
		\includegraphics[scale=1]{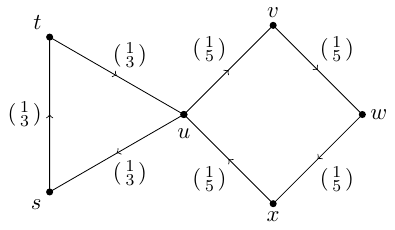}
		\caption[The gluing matrix $\cM$ of \autoref{exx2}]{The gluing matrix $\cM$ of \autoref{exx2}} 
		\label{fig:gluingmat2cyc}
	\end{minipage}
\end{figure}

\begin{ex} \label{exx2}
Let $M$ be the Coxeter matrix whose defining graph consists of a $3$-cycle and a $4$-cycle identified at a vertex, and whose labels are all $3$. Take the gluing matrix $\cM$ of type $M$ and of order $2$ shown in \autoref{fig:gluingmat2cyc}. Put $a=a_{st}$, $b=a_{tu}$, $c=a_{us}$, $d=a_{uv}$, $e=a_{vw}$, $f=a_{wx}$, and $g=a_{xu}$. The $3$-cycle and the $4$-cycle of $L$ are sufficient $L$-cycles for the presentation, thus 
\[   
\Gamma_{\cM}=  \big \la a,b,c,d,e,f,g\ |\ a^7, b^7, c^7,d^7,e^7,f^7,g^7, abc,a^3b^3c^3,defg,d^5e^5f^5g^5  \big \ra    
.\]
\end{ex}

\bibliographystyle{alpha}
\bibliography{sample}


\end{document}